\numberwithin{equation}{section}
\newtheorem{lem}{Lemma}[section]
\newtheorem{thm}{Theorem}[section]
\newtheorem{cor}[thm]{Corollary}
\theoremstyle{remark}
\newtheorem{rmk}{Remark}[section]
\newtheorem{definition}[thm]{Definition}
\renewcommand{\hat}{\widehat}
\renewcommand{\bar}{\overline}
\newtheorem{assum}{Assumption}
\newcommand{\nn}{\nonumber}
\newcommand{\R}{{\mathbb R}}
\newcommand{\dt}{ \, {\rm d} t}
\newcommand{\dx}{ \, {\rm d} x}
\newcommand{\dy}{ \, {\rm d} y}
\newcommand{\ds}{\, {\rm d} s}
\newcommand{\mc}[1]{\mathcal{#1}}
\newcommand{\BB}{\mathbb{B}}
\newcommand{\EE}{\mathbb{E}}
\newcommand{\RR}{\mathbb{R}}
\newcommand{\ud}{\,\mathrm{d}}
\newcommand{\la}{\langle}
\newcommand{\ra}{\rangle}
\newcommand{\norm}[1]{\left\lVert#1 \, \right\rVert}
\DeclareMathOperator*{\esssup}{ess\,sup}
\author{Hui Huang}
\address{Department of Mathematics, Technische Universit\"{a}t M\"{u}nchen, Boltzmannstr. 3, 85748 Garching, Germany}
\email{hui.huang@tum.de}
\author{Jinniao Qiu}
\address{Department of Mathematics and Statistics, University of Calgary, 2500 University Drive NW
	Calgary, AB, Canada,
	T2N 1N4}
\email{jinniao.qiu@ucalgary.ca}
\thanks{The research of J. Qiu is partially supported by the National Science and Engineering Research Council of Canada and the start-up funds from the University of Calgary. H. Huang is partially funded by  the DFG research project "Identification of Energies from Observations of Evolutions" (FO767/7-1).}
\begin{document}
\title[Stochastic Keller-Segel equation]{The microscopic derivation and well-posedness of the stochastic Keller-Segel equation}
\maketitle
\begin{abstract}
In this paper, we propose and study a stochastic aggregation-diffusion equation of the Keller-Segel (KS) type for modeling the chemotaxis in dimensions $d=2,3$.  Unlike the classical deterministic KS system, which only allows for idiosyncratic noises, the stochastic KS equation is derived from an  interacting particle system subject to both idiosyncratic and common noises. Both the unique existence of solutions to the stochastic KS equation and the mean-field limit result are addressed.
\end{abstract}
{\small {\bf Keywords:}
	Chemotaxis, propagation of chaos, Bessel potential, stochastic partial differential equation}

\section{Introduction}
Many bacteria, such as Escherichia coli, Rhodobacter sphaeroides and Bacillus subtilus are able to direct their movements according to the surrounding environment by a biased random walk.  For example, bacteria try to swim toward the highest concentration of nutrition  or to flee from poisons.  In biology, this phenomenon is called \textit{chemotaxis}, which describes the directed movement of cells and organisms in response to chemical gradients.  Chemotaxis is also observed in other biological fields, for instance the movement of sperm towards the egg during fertilization, the migration of neurons or lymphocytes, and inflammatory processes.

Mathematically, one of the most classical models for studying chemotaxis is the Keller-Segel (KS) equation that was  originally proposed in \cite{keller1970initiation} to characterize the aggregation of the slime mold amoebae. The classical parabolic-elliptic type KS equation is of the following form:
\begin{align}\label{KS}
\begin{cases}
\partial_t \rho_t=\triangle \rho_t-\chi\nabla\cdot(\rho_t\nabla c_t),\quad x\in\R^d, t>0\,,\\
-\triangle c_t=\rho_t\,,\\
\rho_0 \text{ is given}\,,
\end{cases}
\end{align}
where $\rho_t(x)$ denotes the bacteria density, and $c_t(x)$ represents the chemical substance concentration.  The constant $\chi>0$ denotes the chemo-sensitivity or response of the bacteria to the chemical substance.  From a mathematical point of view this equation displays many interesting effects and it has become a topic of intense mathematical research. An important feature of this equation is the competition between the diffusion $\triangle \rho_t$ and the nonlocal aggregation $-\chi\nabla\cdot(\rho_t\nabla c_t)$. Depending on the choice of the initial mass $m_0:=\int_{\RR^d}\rho_0(x)\dx$ and the chemo-sensitivity $\chi$, the solutions to the KS equation may exist globally or blow-up in finite time.  In particular, for sufficiently smooth initial conditions, the existence of solutions was verified by J{\"a}ger and Luckhaus \cite{jager1992explosions}:  if $m_0\chi$ is large then solutions are local in time,  and they are global in time if $m_0\chi$ is small.    For the 2-dimensional case, Dolbeault and Perthame \cite{dolbeault2004optimal}  completed the  result of \cite{jager1992explosions} by providing  an exact value for the critical mass: classical solutions to \eqref{KS} blow-up in finite time when $m_0\chi>8\pi$, and there exists a global in time solution of \eqref{KS} when $m_0\chi>8\pi$.  For the case with $m_0\chi=8\pi$,  Blanchet, Carrillo and Masmoudi \cite{blanchet2008infinite} showed that global solutions blow-up in infinite time converging
towards a delta dirac distribution at the center of mass.
There is an extensive literature on KS  systems and their variations, which is out of the scope of this paper. A comprehensive survey on known results related to the KS model from 1970 to 2000 can be found in  \cite{horstmann20031970}. We also refer to \cite{hillen2009user,perthame2006transport,biler2018mathematical} among many others for more recent developments.  

It is also well known that the KS equation \eqref{KS} can be derived from a system of interacting particles $\{(X_t^i)_{t\geq 0}\}_{i= 1}^N$ satisfying the following form of stochastic differential equations (SDEs): 
\begin{equation}\label{KSparticle}
{\rm d} X_t^i=\frac{\chi}{N-1}\sum_{j\neq i}^{N}F(X_t^i-X_t^j)\dt+\sqrt{2}{\rm d}B_t^i,\quad i=1,\cdots,N,\quad t>0\,,
\end{equation}
where the process $(X_t^i)_{t\geq 0}$ denotes the trajectory of the $i$-th particle, the function $F$ models the pairwise interaction between particles and $\{(B_t^i)_{t\geq 0}\}_{i= 1}^N$ are $N$ independent Wiener processes. The rigorous derivation of the KS equation, for example \eqref{KS}, from the microscopic particle system, e.g. \eqref{KSparticle}, through the propagation of chaos as $N\rightarrow \infty$ may be found in  \cite{HH1,HH2,fournier2015stochastic,havskovec2011convergence,huang2019learning,fetecau2019propagation,bresch2019mean}. For a review of the topic of the propagation of chaos and the mean-field limit, we refer the readers to \cite{jabin2017mean,carrillo2014derivation} and the references therein.  An asymptotic method, inspired by  Hilbert's sixth problem \cite{hilbert1902mathematical}, can also be applied to derive models at the macro-scale  (PDEs) from the underlying description at the micro-scale (particle systems); see \cite{bellomo2016multiscale,burini2019multiscale} for instance. 

However,  for the classical deterministic KS equation \eqref{KS}, the associated particle system \eqref{KSparticle} is only  subject to the idiosyncratic noises that are independent from one particle to another, and the effect of the idiosyncratic noises averages out, leading to the deterministic nature of the equation \eqref{KS}. In addition to such idiosyncratic noises,  this paper studies the particle systems allowing for common/environmental noises, and the limiting density function satisfies a stochastic partial differential equation of KS type which is new to the best of our knowledge.  Common environmental noises (such as temperature, light and sound) are intrinsic to a more realistic setting such as culturing bacteria.


Let $(\Omega, {\mc F}, ({\mc F}_t)_{t\geq 0},\mathbb P)$ be a complete filtered probability space where the $d'$-dimensional Wiener processes $\{(B_t^i)_{t\geq 0}\}_{i=1}^N$ are independent of each other as well as of a $d'$-dimensional Wiener process $(W_t)_{t\geq 0}$\footnote{The dimension of Wiener process $W$ may be different from $d'$; we assume the same dimensionality for notational simplicity.}.  The initial data $\zeta^{i}$, $i=1,2,\dots,N$ are independently and identically distributed (i.i.d.) with a common density function $\rho_0$ and are independent of $\{(B_t^i)_{t\geq 0}\}_{i=1}^{N}$ and $(W_t)_{t\geq 0}$. Denote by $(\mc F^W_t)_{t\geq 0}$ the augmented filtration generated by $(W_t)_{t\geq 0}$.

As the  mean-field limit from the  interacting particle system that allows for both idiosyncratic and common noises, the  stochastic aggregation-diffusion equation of Keller-Segel (KS) type, also called stochastic KS equation, is of the following form:
\begin{align}\label{SPDE}
\begin{cases}
\ud \rho_t
=\frac{1}{2}\sum_{i,j=1}^dD_{ij}\left(\rho_t\sum_{k=1}^{d'}(\nu^{ik}_t\nu^{jk}_t+ \sigma^{ik}_t\sigma^{jk}_t)\right)\dt-\chi\nabla\cdot(\nabla c_t\rho_t)\dt-\sum_{i=1}^{d}D_i\left( \rho_t \sum_{k=1}^{d'}\sigma^{ik}_t\ud W_t^k\right), \\
-\triangle c_t+c_t=\rho_t,\\
\rho_0 \text{ is given}\,,
\end{cases}
\end{align}
where $D_{ij}:=\frac{\partial^2}{\partial x_i\partial x_j}$, $D_{i}:=\frac{\partial}{\partial x_i}$, and the leading coefficients $\nu$ and $\sigma$ are deterministic functions from $[0,T]\times\RR^d$ to $\RR^{d\times d'}$.
%
%
One may solve the second equation for the chemical concentration:
\begin{align}
c_t=(I-\triangle)^{-1}\rho_t=\mc{G}\ast \rho_t(x), \label{c-bessel}
\end{align}
with $\mc{G}$ being the Bessel potential, and it follows that $\nabla c_t=\nabla \mc{G} \ast \rho_t$ where $\nabla \mc{G}$ is called the interaction force. The underlying regularized interacting particle system has the form:
\begin{align}\label{particle}
\begin{cases}
\ud X_t^{i,\varepsilon} =
\frac{\chi}{N-1}\sum\limits_{j\neq i}^N\nabla \mc{G}_{\varepsilon}(X_t^{i,\varepsilon}-X_t^{j,\varepsilon})\dt+\nu_t(X_t^{i,\varepsilon}) \ud B_t^i+\sigma_t(X_t^{i,\varepsilon})\ud W_t,
\quad i=1\cdots,N,\quad  t>0\,,\\
X_0^{i,\varepsilon} =\zeta^i,
\end{cases}
\end{align}
where 
$$\mc{G}_{\varepsilon}(x)=\psi_\varepsilon\ast \mc{G}(x)=\int_{\RR^d} \mc{G}(y)\psi_\varepsilon (x-y )\,dy, \quad x\in\RR^d,\,\,\varepsilon >0,$$
is the regularized Bessel potential with the mollifier function $\psi_\varepsilon(x):=\frac{1}{\varepsilon^d}\psi(\frac{x}{\varepsilon})$ satisfying
\begin{equation}
0\leq \psi \in C_c^\infty(\R^d),\quad \mbox{supp }\psi\subseteq B(0,1),\quad \int_{B(0,1)}\psi(x)\dx=1\,.
\end{equation}
 We mention here relevant work \cite{cattiaux20162,fournier2015stochastic} for the existence of solutions to  the non-mollified stochastic particle system \eqref{KSparticle}.  Especially in \cite[Proposition 4]{fournier2015stochastic}, they proved that for any $N \geq2$ and $T>0$, if $\{(X_t^i)_{t\geq 0}\}_{i= 1}^N$ is
the solution to \eqref{KSparticle}, then 
$$\mathbb{P}\left(\exists s \in[0, T], \exists 1 \leq i<j \leq N: X_{s}^{i}=X_{s}^{j}\right)>0$$
i.e. the singularity of the drift term is visited and the particle system is not clearly well-defined. Therefore in order to obtain a global strong solution to the interacting particle system, we regularize the singular force term $\nabla \mc{G}$.

In contrast with the classical KS models \eqref{KS} and \eqref{KSparticle}, which only allow for the idiosyncratic noise $(B^i_t)_{t\geq 0}$ that is independent from one particle to  another, the stochastic systems \eqref{SPDE} and \eqref{particle} are additionally subject to  common noise $(W_t)_{t\geq 0}$, accounting for the common environment where the particles evolve. This common noise leads to the stochastic integrals in stochastic KS equation \eqref{SPDE}, whose (continuous) martingale property and unboundedness result in the inapplicability of classical analysis for deterministic KS equations. In addition, the diffusion coefficients $\sigma$ and $\nu$ are time-state dependent; along the same lines, a general model may allow for diffusion incorporating  L$\acute{\text{e}}$vy type noises and/or dependence on the density (for instance, see \cite{burini2019multiscale,escudero2006fractional,huang2016well} for discussions on deterministic KS models with flux limited or fractional diffusion), although we will not seek such a generality herein.

In this paper, we first prove the existence and uniqueness results for both \textit{weak and strong} solutions to SPDE \eqref{SPDE}. Basically, over a given finite time interval $[0,T]$ when the $L^4$-norm of $\rho_0$ is sufficiently small, the weak solution exists uniquely and its regularity may be increased for regular initial value $\rho_0$ (see Theorems \ref{prop-weak-soltn} and \ref{thm-wellposedness}). Then, based on a duality analysis of forward and backward SPDE, we prove that the following stochastic differential equations (SDEs) of McKean-Vlasov type:
\begin{align}\label{SDE}
\begin{cases}
\ud Y^i_t= \chi\nabla \mc{G}\ast\rho^i_t(Y^i_t)\dt+\nu(Y^i_t) \ud B^i_t+\sigma(Y^i_t)\ud W_t,\quad i=1,\cdots,N, \quad t>0\,,\\
\rho^i_t\mbox{ is the conditional density of  }Y^i_t \mbox{ given } \mc{F}_t^W\,,\\
Y^i_0=\zeta^i,
\end{cases}
\end{align}
has a unique solution with the conditional density $\rho^i_t$ of $Y_t^i$ given the common noise $W_t$ \textit{existing and satisfying} SPDE \eqref{SPDE}; see Theorem \ref{thmSDE}. 
Here by the conditional density $\rho^i_t$ of $Y^i_t$ given $\mc{F}_t^W$, we mean that
$$\EE[Y^i_t\in\dx|\mc{F}_t^W]=\rho^i_t(x)\dx,$$ 
i.e., for any  $\varphi\in C_b(\RR^d)$, it holds that
\begin{align*}
\EE[\varphi(Y^i_t)|\mc{F}_t^W]=\int_{\RR^d}\varphi(x)\rho^i_t(x)\dx\,.
\end{align*}
 Finally, we prove that the solution $\{(X_t^{i,\varepsilon})_{t\geq 0}\}_{i=1}^N$ of the particle system \eqref{particle} well approximates that of \eqref{SDE}, which indicates the mean-field limit result, i.e.,  the empirical measure 
		\begin{equation*}
	\rho_t^{\varepsilon,N}:=\frac{1}{N}\sum_{i=1}^{N}\delta_{X_t^{i,\varepsilon}}\,,
	\end{equation*}
associated with the particle system \eqref{particle} converges weakly to  the unique solution $\rho$ to SPDE \eqref{SPDE} as $N\rightarrow \infty$ and $\varepsilon \rightarrow 0^+$; see Theorem \ref{thmmean} and Corollary \ref{cormean}.

In view of SPDE \eqref{SPDE} and the particle system \eqref{particle}, one may see that when the particle number $N$ tends to infinity,  the effect of the idiosyncratic noises averages out while the effect of common noises does not, leading to the stochastic nature of the limit distribution characterized by SPDE \eqref{SPDE}. We refer to \cite{Bensoussan2013Mean,carmona2016mean,carmona2018probabilistic,coghi2016propagation}  for different models with common noise in the literature. In particular, in a closely related work \cite{coghi2016propagation}, the authors study the propagation of chaos for an interacting particle system subject to a common environmental noise but with a uniformly Lipschitz continuous potential, and in  \cite{choi2019cucker}, the stochastic mean-field limit of the Cucker-Smale flocking particle system is obtained for a special class of noises. In contrast to the existing literature concerning common noise, the main difficulties in dealing with the proposed stochastic KS models are from the Bessel potential $\mc{G}$ which entails the singularity of the drift of SDE \eqref{SDE} and the KS type nonlinear and nonlocal properties of SPDE \eqref{SPDE}; in particular, the KS type nonlinear term $-\chi \nabla\cdot((\nabla\mc{G}\ast \rho_t) \rho_t)$ prevents us from adopting the existing methods in the SPDE literature. Accordingly, the existence and uniqueness of solution to SPDE \eqref{SPDE} is established within sufficiently regular spaces under a divergence-free assumption on coefficient $\sigma$, and we prove that the conditional density exists and satisfies  equation \eqref{SPDE} with a new method based on duality analysis. In addition, for the mean-field limit result,  we also introduce regularization with a mollifier function in the particle system \eqref{particle}. In this paper, the approaches mix and develop the existing probability theory and stochastic analysis, (S)PDE theory,  and the duality analysis in nonlinear filtering theory.  Given the outstanding interests shown in the mathematical analysis of biological phenomena, we hope this article will set the stage for further studies on stochastic aggregation-diffusion type equations, opening new perspectives and motivating applied mathematicians to expand the research on this class of models to novel applications.

The rest of the paper is organized as follows. In Section 2, we set some notations, present some auxiliary results and give the standing assumptions on the diffusion coefficients. Section 3 is then devoted to the proof of the existence and uniqueness of the \textit{weak and strong} solution to stochastic KS equation \eqref{SPDE} in certain regular spaces. On the basis of the well-posedness of SPDE \eqref{SPDE}, we prove the existence and uniqueness of the strong solution to SDE \eqref{SDE} in Section 4. Finally, the mean-field limit result is addressed in Section 5.


\section{Preliminaries}

\subsection{Notations}
The set of all the integers is denoted by $\mathbb{Z}$, with $\mathbb{Z}^+$ the subset of the strictly positive elements. Denote by $|\cdot|$ (respectively,
$\langle \cdot,\cdot \rangle$ or $\cdot$) the usual norm (respectively, scalar product) in
finite-dimensional Hilbert space such as $\mathbb R,\mathbb R^k,\mathbb R^{k\times l}$, $k,l\in \mathbb{Z}^+$.  We use $\|f\|_p$ for the $L^p$ $(1\leq p\leq \infty)$ norm of a function $f$. 

Define the set of multi-indices
$$\mathcal {A}:=\{\alpha=(\alpha_1,\cdots,\alpha_d): \alpha_1, \cdots, \alpha_d \textrm{
are nonnegative integers}\}.$$ For any $\alpha\in \mathcal{A}$ and
$x=(x_1,\cdots,x_d)\in \RR^d,$ denote
$$ |\alpha|=\sum_{i=1}^d
\alpha_i,\ x^{\alpha}:=x_1^{\alpha_1}x_2^{\alpha_2}\cdots x_d^{\alpha_d},\ D^{\alpha}:=\frac{\partial^{|\alpha|}}
{\partial x_1^{\alpha_1}\partial x_2^{\alpha_2}\cdots\partial x_d^{\alpha_d}}. $$

For each Banach space $(\mathcal {X},\|\cdot\|_{\mathcal {X}})$,  real $q\in[1,\infty]$, and $0\leq t <\tau\leq T$, we denote by $S_{\mathcal F}^q([t,\tau];\mathcal{X})$ the set of $\mathcal{X}$-valued, $\mathcal F_t$-adapted and continuous processes $\{X_s\}_{s\in[t,\tau]}$ such that
\begin{equation*}
\|X\|_{S_{\mathcal F}^q([t,\tau];\mathcal{X})}:=
\left\{\begin{array}{l}
\left(\EE\Big[  \sup_{s\in[t,\tau]} \|X_s\|_{\mathcal{X}}^q  \Big]\right)^{1/q},\quad q\in[1,\infty);\\
\esssup_{\omega\in\Omega}  \sup_{s\in[t,\tau]} \|X_s\|_{\mathcal{X}},\quad q=\infty.
\end{array}\right.
\end{equation*}
$L_{\mathcal F}^q(t,\tau;\mathcal{X})$ denotes the set of (equivalent classes of) $\mathcal{X}$-valued predictable processes
$\{X_s\}_{s\in[t,\tau]}$ such that
\begin{equation*}
\|X\|_{L^q_{\mathcal F}(t,\tau;\mathcal{X})}:=
\left\{\begin{array}{l}
\left(\EE\Big[  \int_t^\tau \|X_s\|_{\mathcal{X}}^q\,ds  \Big]\right)^{1/q},\quad q\in[1,\infty);\\
\esssup_{(\omega,s)\in\Omega\times{[t,\tau]}} \|X_s\|_{\mathcal{X}},\quad q=\infty.
\end{array}\right.
\end{equation*}
Both $\left(S_{\mathcal F}^q([t,\tau];\mathcal{X}),\|\cdot\|_{S^q([t,\tau]\mathcal{X})}\right)$ and $\left(L_{\mathcal F}^q(t,\tau;\mathcal{X}),\|\cdot\|_{L_{\mathcal F}^q(t,\tau;\mathcal{X})}\right)$ are Banach spaces, and they are well defined with the filtration $(\mc F_t)_{t\geq 0}$ replaced by $(\mc F_t^W)_{t\geq 0}$.

\subsection{Auxiliary results and assumptions}
We first recall some properties of the Bessel potential introduced in \eqref{c-bessel}. For $p\in[1,\infty]$, denote by $L^p=L^p(\RR^d)$ the usual Lebesgue integrable spaces with norm $\|\cdot\|_p$.  Then for $p\in (1,\infty)$ and $m\in \RR$, we may define the space of Bessel potentials (or the Sobolev space with fractional derivatives) \cite[p. 37]{Trieb} as 
$$\mc H_p^m(\RR^d)=\left\{f\big|~\|f\|_{\mc H_p^m}:=\|\mathscr{F}^{-1}[(1+|\omega|^2)^{\frac{m}{2}}\mathscr{F}(f)]\|_p<\infty\right\}\,,$$
where $\mathscr{F}$ is the Fourier transformation. Namely, $\mc H_p^m(\RR^d)$ (simply written as $\mc H_p^m$) is defined as space of functions $f$ such that $(1-\triangle)^{\frac{m}{2}}f\in L^p(\RR^d)$. 
In \eqref{c-bessel}, if $\rho_t\in L^p$ with $1<p<\infty$, then $c_t \in \mc{H}_p^2$. In addition, it holds that 
\begin{align*}
\norm{c_t}_{\mc{H}_p^2}=\norm{\mathscr{F}^{-1}\left[(1+|\omega|^2)\mathscr{F}[c_t]\right]}_{p}=\norm{\rho_t}_{p}\,.
\end{align*}
  Due  to the equivalence between the Bessel potential space $\mc{H}_p^k(\RR^d)$  and the Sobolev space $W^{k,p}(\RR^d)$ $(k\in \mathbb{N})$, we have
\begin{equation}\label{cestimate}
\norm{\mc{G}\ast \rho_t}_{W^{2,p}}=\norm{c_t}_{W^{2,p}}\leq C\norm{\rho_t}_{p}.
\end{equation}
 Here  the Sobolev space $W^{{k,p}}(\RR^d )$  is defined as
$$W^{k,p}(\RR^d )=\left\{u\in L^{p}(\RR^d )\big|~D^{\alpha }u\in L^{p}(\RR^d ),\,\,\forall~ |\alpha |\leq k\right\}$$
and
\begin{align*}
\|u\|_{W^{k, p}}:=\left\{
\begin{array}{ll}\left(\sum_{|\alpha| \leqslant k}\left\|D^{\alpha} u\right\|_{L^{p}(\RR^d)}^{p}\right)^{\frac{1}{p}} & 1 \leqslant p<\infty,
 \\ \max _{|\alpha| \leqslant k}\left\|D^{\alpha} u\right\|_{L^{\infty}(\RR^d)} & p=\infty.
 \end{array}\right.
\end{align*}

On the other side, notice that
\begin{equation*}
(I-\triangle)^{-1}=(-\triangle)^{-1}-(-\triangle)^{-1}(I-\triangle)^{-1}\,.
\end{equation*}
Thus,  we may split the Bessel potential into the Newtonian potential $\Phi$ and a function $\Psi$ such that $\mathscr{F}(\Psi)(\omega)=-\frac{1}{\omega^2(1+\omega^2)}$, which implies that
 $\Psi\in L^\infty(\RR^d)$ $(d=3)$ or $\nabla\Psi\in L^\infty(\RR^d)$ $(d=2)$. 
Namely, one has
\begin{equation}\label{Gsplit}
\mc{G}(x)=\Phi(x)+\Psi(x)\,,
\end{equation}
where $$
\Phi(x)=\left\{\begin{array}{ll}{\frac{C_{d}}{|x|^{d-2}},} & {\text { if } d \geq 3} \\ {-\frac{1}{2 \pi} \ln |x|,} & {\text { if } d=2}\end{array}\right.$$
 is the Newtonian potential.  It then follows that for any $\alpha\in\mathcal A$ with $|\alpha|\geq 1$, there holds
\begin{align}
\norm{D^\alpha(\nabla\mc{G}_\varepsilon)}_\infty
&\leq C_{\alpha}\varepsilon^{1-d-|\alpha|}+
\begin{cases}
C_{\alpha,\norm{\varPsi}_\infty}\varepsilon^{-1-|\alpha|},\quad &\mbox{when }d=3\,;\\
C_{\alpha,\norm{\nabla\varPsi}_\infty}\varepsilon^{-|\alpha|},\quad &\mbox{when }d=2
\end{cases}
\nonumber \\
&\leq C_\alpha\varepsilon^{1-d-|\alpha|}\,.\label{espotential}
\end{align}
Here, we have used the estimate $\norm{D^\alpha(\nabla\Phi_\varepsilon)}_\infty\leq C_{\alpha}\varepsilon^{1-d-|\alpha|}$ from \cite[Lemma 2.1]{HH1}.

Following are the standing assumptions on the coefficients $\nu$ and $\sigma$.
\begin{assum}\label{assum1} Given $T>0$  any arbitrary time horizon  and $d=2,3$, the measurable diffusion coefficients 
$\sigma,\,\nu: [0,T]\times \RR^d \longrightarrow \RR^{d\times d'}$
satisfy
	\begin{enumerate}[(i)]
		\item There exists a positive constant $\lambda$ such that
		$$
		\sum_{i,j=1}^d \sum_{k=1}^{d'}\nu^{ik}_t(x)\nu^{jk}_t(x)\xi^i\xi^j\geq \lambda |\xi|^2
		$$
		holds for all $x,\xi\in\mathbb R^d$ and all $t\geq 0$;\\
		\item There exist $m\in\mathbb Z^+$ and real $\Lambda >0$ such that for all $t\in[0,T]$ there holds
		$$
		\nu^{ik}_t(\cdot),\,\sigma^{ik}_t(\cdot)\in C^m , \,\,\text{for } i=1,\dots,d, \,k=1,\dots,d',\quad \text{and}\quad \sum_{i=1}^d\sum_{k=1}^{d'}
		\|\sigma^{ik}_t(\cdot)\|_{C^m}+\|\nu^{ik}_t(\cdot)\|_{C^m} \leq \Lambda,
		$$
		 where the $C^m$ norm is defined as
		$\|f\|_{C^m}=\sum_{|\alpha|\leq m}\|D^\alpha f\|_\infty$.
		\\
		\item For all $(t,x)\in[0,T]\times \mathbb R^d$ and $k=1,2,\dots,d'$,
		$$
		\sum_{i=1}^dD_i\sigma^{ik}_t(x)=0.
		$$
%
		
	\end{enumerate}
\end{assum}
\begin{rmk}\label{rmk-ass}
	The assumption (i) ensures the superparabolicity of the concerned SPDE, and the boundedness and regularity requirements in (ii) are placed for unique existence of certain regular solutions of SPDE. The readers are referred to \cite{krylov1999analytic} for more discussions. The divergence-free condition (iii) may be thought of as a technical one for the well-posedness of SPDE \eqref{SPDE} (see Remark \ref{rmk-ass-iii});  on the other hand, the common noise in the stochastic integral term $\sigma_t(X_t^{i,\varepsilon})\,dW_t$ induces the fluctuations of the velocity field (of the $i$th particle) formally written as 
	$ v^{i}_t=\sigma_t(X_t^{i,\varepsilon})\frac{dW_t}{dt}$ and in this way, the divergence-free condition means that such fluctuations are of incompressible type.  In fact, such kind of divergence-free conditions have been existing in the literature; refer to \cite{Brze2016Existence,coghi2016propagation} for more clear and elegant arguments. 
\end{rmk}

In the remaining part of the work, we shall use $C$ to denote a generic constant whose value may vary from line to line, and when needed, a bracket will follow immediately after $C$ to indicate what parameters $C$ depend on. 
By $A\hookrightarrow B$ we mean that normed space $(A,\|\cdot\|_{A})$ is embedded into  $(B,\|\cdot\|_B)$ with a constant $C$ such that
$$\|f\|_B\leq C \|f\|_A,\,\,\,\forall f\in A.$$
For readers' convenience, we list Sobolev's embedding theorem in the following lemma, see e.g. \cite[p. 129, p. 131]{Trieb} and \cite[Chapter 9]{brezis2010functional}.
\begin{lem}\label{lem sobolev} 
  There holds the following assertions:

  (i)  For integer $n>d/q+k$ with $k\in \mathbb{N}$ and $q\in(1,\infty)$, we have  $W^{n,q}(\RR^d)\hookrightarrow C^{k,\delta}(\RR^d)$, for any $\delta\in (0,(n-d/q-k)\wedge 1).$

  (ii) If $1<p_0<p_1<\infty$ and $-\infty<s_1<s_0<+\infty$ such that  $\frac{d}{p_0}-s_0=\frac{d}{p_1}-s_1$, then $\mc{H}_{p_0}^{s_0}(\RR^d)\hookrightarrow \mc{H}_{p_1}^{s_1}(\RR^d)$ (with Sobolev spaces as special cases ).

%
\end{lem}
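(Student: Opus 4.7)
The statement compiles two classical Sobolev-type embeddings, so my plan is to sketch the proofs just to the point where one can invoke standard references such as \cite{Trieb,brezis2010functional}; both parts reduce to estimating convolutions with explicit kernels and applying the Hardy--Littlewood--Sobolev (HLS) inequality.

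For part (i), the plan is to first prove Morrey's inequality $W^{1,q}(\RR^d) \hookrightarrow C^{0,1-d/q}(\RR^d)$ in the base range $q>d$. Starting from the pointwise representation
\[
u(x)-u(y) = \int_0^1 (x-y)\cdot \nabla u\bigl((1-t)x + ty\bigr)\,\mathrm{d}t,
\]
one averages over a ball of radius $|x-y|$ and applies H\"older's inequality to extract the H\"older seminorm with exponent $1-d/q$. To pass from $n=1,\,k=0$ to general $n,k$ with $n>d/q+k$, I would apply this base estimate to the derivatives $D^\alpha u$ with $|\alpha|=k$; since $n-k>d/q$, each $D^\alpha u$ lies in $C^{0,\delta}$ with $\delta\in(0,(n-k-d/q)\wedge 1)$, the cap at $1$ arising from the Lipschitz ceiling of a single Morrey application.

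For part (ii), the plan is to realize $(1-\Delta)^{-(s_0-s_1)/2}$ as convolution with the Bessel kernel $G_{s_0-s_1}$, whose Fourier symbol is $(1+|\omega|^2)^{-(s_0-s_1)/2}$. The decisive kernel estimate is the two-sided asymptotic: $G_\alpha(x)\asymp |x|^{\alpha-d}$ near the origin (matching the Riesz kernel) and decays exponentially at infinity. Splitting $G_\alpha = G_\alpha \mathbf{1}_{|x|\leq 1} + G_\alpha \mathbf{1}_{|x|>1}$, the singular part is handled by HLS, which delivers $L^{p_0}\to L^{p_1}$ boundedness of the Riesz potential precisely under the scaling $\tfrac{d}{p_0}-\alpha = \tfrac{d}{p_1}$ (the identity in the hypothesis), while the regular part is controlled by Young's inequality since it is integrable. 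Composing with $(1-\Delta)^{s_1/2}$ and using the definition of the $\mc{H}_{p_i}^{s_i}$-norms yields the embedding; the Sobolev-space case is recovered from the equivalence $W^{k,p}=\mc{H}_p^k$ for integer $k$ noted before \eqref{cestimate}.

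The main obstacle is the HLS inequality itself, which is the analytic engine behind both parts and which I would not re-prove but simply quote. The remaining work is accounting: tracking the precise H\"older exponent in (i), and translating the HLS scaling back through the Bessel calculus in (ii) to produce the relation $d/p_0-s_0=d/p_1-s_1$ exactly as stated.
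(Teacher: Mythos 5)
The paper does not prove this lemma at all: it is stated purely as a citation of the classical Sobolev embedding theorems, with references to Triebel (pp.\ 129, 131) and Brezis (Chapter 9), so there is no in-paper argument to compare against. Your sketch is essentially the standard textbook proof that those references contain, and part (ii) as you outline it is sound: with $\alpha=s_0-s_1=d/p_0-d/p_1\in(0,d)$ (automatic from $1<p_0<p_1<\infty$), the two-sided bound $G_\alpha(x)\asymp|x|^{\alpha-d}$ near the origin plus exponential decay, together with Hardy--Littlewood--Sobolev for the singular piece and Young for the integrable tail, gives $(1-\triangle)^{-\alpha/2}:L^{p_0}\to L^{p_1}$, and composing with $(1-\triangle)^{s_1/2}$ yields the embedding by the very definition of the $\mc{H}^{s}_{p}$ norms.

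In part (i), however, one step would fail as literally written. You propose to apply the base Morrey estimate directly to $D^\alpha u\in W^{n-k,q}(\RR^d)$ for $|\alpha|=k$, but Morrey's inequality requires the integrability exponent to exceed $d$; when $q\le d$ (e.g.\ $d=3$, $q=2$, $n=3$, $k=1$, which satisfies $n>d/q+k$), a single application to $W^{1,q}$ is not available. The standard fix is the intermediate $L^p$-scale embedding: first use $W^{n-k,q}(\RR^d)\hookrightarrow W^{1,r}(\RR^d)$ with $r>d$ chosen via $d/q-(n-k-1)<d/r$ (this is exactly part (ii) with integer smoothness, or Gagliardo--Nirenberg--Sobolev iterated), and only then apply Morrey to get $C^{0,\delta}$ with $\delta<1-d/r$, which sweeps out the whole open interval $\bigl(0,(n-d/q-k)\wedge 1\bigr)$ as $r$ varies. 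Relatedly, your framing that HLS is "the analytic engine behind both parts" is slightly off for the route you describe: the Morrey argument itself does not use HLS, though HLS (or the $L^p$ Sobolev embedding it implies) does enter through precisely this missing chaining step. With that intermediate embedding inserted, your outline matches the classical proof the paper is implicitly invoking.
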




\section{Existence and uniqueness of the solution to SPDE \eqref{SPDE}}
This section is devoted to the global existence and uniqueness of the solution to nonlinear SPDE \eqref{SDE}.

 As already noted in \eqref{cestimate}, if $\rho_t\in L^4$, then it holds that 
\begin{equation}
\norm{c_t}_{W^{2,4}}=\norm{\mc{G}\ast\rho_t}_{W^{2,4}}\leq S_d\norm{\rho_t}_{4}.
\end{equation}
A direct result of Sobolev's embedding theorem implies
\begin{equation}\label{embedding-inf}
\norm{c_t}_{W^{1,\infty}}=\norm{\mc{G}\ast\rho_t}_{W^{1,\infty}}\leq \norm{\mc{G}\ast\rho_t}_{W^{2,4}}\leq S_d\norm{\rho_t}_{4},
\end{equation}
where $S_d$ depends only on $d$.

Before stating the theorem about the well-posedness, we introduce the definition of solutions to SPDE \eqref{SPDE}. Denote by $ C_c^2(\RR^d)$ the space of compactly supported functions having up to second-order continuous derivatives.
\begin{definition}
	A  family of random functions $\{\rho_t(\omega):~t\geq 0,\omega\in \Omega\}$ lying in $S_{\mc F^W}^{\infty}([0,T];L^1\cap L^4(\RR^d))$ is a solution to equation \eqref{SPDE} if
$\rho_t$ satisfies the following stochastic integral equation for all $\varphi\in C_c^2(\RR^d)$,
		\begin{align}
		\la\rho_t,\varphi\ra&=\la\rho_0,\varphi\ra+\chi\int_0^t\la\rho_s,\nabla \varphi  \cdot \nabla c_s\ra\ds+\int_0^t\la\rho_s,\sum_{i=1}^{d}D_i \varphi \sum_{k=1}^{d'}\sigma_s^{ik}\ud W_s^k\ra  \notag\\
		&\quad +\frac{1}{2}\int_0^t\la \rho_s,\sum_{i,j=1}^dD_{ij} \varphi\sum_{k=1}^{d'}(\nu_s^{ik}\nu_s^{jk}+ \sigma_s^{ik}\sigma_s^{jk})\ra\ds\,.
		\end{align}
\end{definition}

\begin{thm}\label{prop-weak-soltn}
Let Assumption \ref{assum1} hold with $m=2$.  Assume $0\leq \rho_0\in L^1\cap \mc{H}^{\frac{1}{2}}_{4}(\RR^d)$\footnote{Here, the initial condition $\rho_0\in \mc{H}^{\frac{1}{2}}_{4}(\RR^d)$ is required by the $L^p$-theory of SPDEs (see \cite[Theorem 5.1]{krylov1999analytic}) for $p=4$.} with $\norm{\rho_0}_1=1$. For each $T>0$, there exists a $\kappa>0$ depending only on $T,\chi,\lambda,\Lambda$ and $ d$ such that if $\|\rho_0\|_{4}\leq \kappa$, SPDE \eqref{SPDE} admits a unique nonnegative   solution in
\begin{equation}\label{defM}
\mathbb M:=L^2_{\mathcal F^W} (0,T;W^{1,2}(\RR^d))\cap L^4_{\mathcal F^W} (0,T;W^{1,4}(\RR^d)) \cap S_{\mathcal F^W}^{\infty}([0,T];L^1\cap L^4(\RR^d)).
\end{equation}
\end{thm}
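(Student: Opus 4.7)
My plan is to establish well-posedness by a Picard-type iteration combined with sharp $L^p$ a priori estimates that exploit the divergence-free condition on $\sigma$ and the smallness of $\|\rho_0\|_4$. The first step is to linearize SPDE \eqref{SPDE} by freezing the chemotactic drift: given $\bar\rho \in \mathbb M$ with $\sup_t\|\bar\rho_t\|_4\leq R$, set $\bar c_t := \mc{G}\ast\bar\rho_t$, so that \eqref{embedding-inf} yields $\|\nabla\bar c_t\|_\infty \leq S_d R$. The resulting linear parabolic SPDE
\begin{equation*}
\ud\tilde\rho_t = \tfrac{1}{2}\sum_{i,j}D_{ij}\!\left(\tilde\rho_t\sum_k(\nu^{ik}_t\nu^{jk}_t+\sigma^{ik}_t\sigma^{jk}_t)\right)\!\dt - \chi\nabla\!\cdot\!(\nabla\bar c_t\,\tilde\rho_t)\dt -\sum_{i,k}D_i(\tilde\rho_t\sigma^{ik}_t)\ud W^k_t
\end{equation*}
has bounded coefficients smooth in $x$ by Assumption \ref{assum1} with $m=2$ and the choice of $\bar\rho$. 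Krylov's $L^p$-theory for linear parabolic SPDEs then produces a unique solution $\tilde\rho\in\mathbb{M}$ from $\rho_0\in L^1\cap\mc H_4^{1/2}$. This defines the solution map $\Psi:\bar\rho\mapsto\tilde\rho$.

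The heart of the proof consists of uniform $L^2$ and $L^4$ energy estimates on $\tilde\rho$. Applying the Itô formula to $\|\tilde\rho_t\|_p^p$ for $p=2,4$, I would exploit the divergence-free condition $\sum_i D_i\sigma^{ik}=0$ to rewrite the stochastic term as $-\sigma^{ik}_t D_i\tilde\rho_t \ud W_t^k$, whose quadratic-variation correction combines cleanly with the second-order term $\frac{1}{2}D_{ij}(\tilde\rho_t\sigma^{ik}\sigma^{jk})$; together with the $\nu\nu^\top$ part and integration by parts, the super-parabolicity constant $\lambda$ produces a dissipation of order $\lambda\int|\nabla(\tilde\rho^{p/2})|^2\dx$. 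Integrating by parts in the chemotactic term and substituting $-\Delta\bar c_t+\bar c_t=\bar\rho_t$ converts it into $(p-1)\chi\int\tilde\rho_t^{p}(\bar\rho_t-\bar c_t)\dx$. The benign $\bar c_t$ contribution is controlled by $\|\bar c_t\|_\infty\lesssim\|\bar\rho_t\|_4$; the genuinely critical term $(p-1)\chi\int\tilde\rho_t^p\bar\rho_t\dx$ is estimated by Hölder and Gagliardo–Nirenberg to bring out a factor of $\|\bar\rho_t\|_4$ multiplying the dissipation, so that it can be absorbed when $\|\bar\rho_t\|_4$ is small.

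The main obstacle is arranging the Itô/GN algebra to produce a closed a priori bound of the form
\begin{equation*}
\|\tilde\rho\|^4_{S^\infty_{\mc F^W}([0,T];L^4)}+\lambda\|\tilde\rho\|^4_{L^4_{\mc F^W}(0,T;W^{1,4})}\leq C_T\|\rho_0\|_4^4
\end{equation*}
valid whenever $\sup_t\|\bar\rho_t\|_4\leq C'\|\rho_0\|_4$ and $\|\rho_0\|_4\leq\kappa$ for some $\kappa=\kappa(T,\chi,\lambda,\Lambda,d)$ small. This identifies a closed ball $\mathbb B_\kappa\subset\mathbb M$ mapped into itself by $\Psi$. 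A parallel estimate on $\Psi(\bar\rho_1)-\Psi(\bar\rho_2)$, whose source term is $\chi\nabla\!\cdot\!\bigl((\nabla\bar c_1-\nabla\bar c_2)\tilde\rho_2\bigr)$ with $\|\nabla(\bar c_1-\bar c_2)\|_\infty\lesssim\|\bar\rho_1-\bar\rho_2\|_4$, then yields contraction in $L^2_{\mc F^W}(0,T;L^2)$ (or a similar weaker norm) after possibly shrinking $\kappa$, and the Banach fixed point theorem produces the unique $\rho\in\mathbb B_\kappa$; global uniqueness in $\mathbb M$ follows from the same energy estimate applied to the difference of two solutions.

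Finally, nonnegativity of $\rho$ would be obtained by applying the Itô formula to $\int(\tilde\rho_t)_-^2\dx$ along the iterates: the divergence-free condition on $\sigma$ together with integration by parts eliminates the stochastic contribution and gives a Gronwall inequality that forces $(\tilde\rho_t)_-\equiv 0$ provided the initial datum is nonnegative. Mass conservation $\|\rho_t\|_1=1$ is then recovered by testing the weak formulation against a smooth cutoff $\varphi_R$ and letting $R\to\infty$, using the $L^1$ control inherited from the iteration.
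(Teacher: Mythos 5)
Your proposal is correct and follows essentially the same route as the paper: freeze the chemotactic drift, solve the linearized SPDE by Krylov's $L^p$-theory, run an It\^o $L^4$-energy estimate in which the divergence-free condition on $\sigma$ makes the stochastic contribution vanish (so the bound is pathwise and the smallness threshold $\kappa$ is deterministic), absorb the critical term using the smallness of the $L^4$-norm, and close with a Banach fixed point on a ball. The only deviations are cosmetic: the paper estimates the chemotactic term directly via $\|\nabla\mc{G}\ast\xi_s\|_\infty\leq S_d\|\xi_s\|_4$ instead of integrating by parts with $-\triangle \bar c+\bar c=\bar\rho$ and Gagliardo--Nirenberg, obtains nonnegativity from Krylov's maximum principle rather than a negative-part energy argument, and contracts in $S^\infty_{\mathcal F^W}([0,T];L^4(\RR^d))$ rather than a weaker norm.
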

\begin{proof}
The proof is based on delicate estimates of the solution and the latest developments of $L^p$-theory of SPDE.     First, let
$$
\mathbb B:=\left\{u\in S_{\mathcal F^W}^{\infty} ([0,T];L^4(\RR^d)): \, \|u\|_{S_{\mathcal F^W}^{\infty} ([0,T];L^4(\RR^d))} \leq 
 \ell \kappa  
\right\},
$$
with metric $d(u,v)=\norm{u-v}_{S_{\mathcal F^W}^{\infty} ([0,T]; L^4(\RR^d))}$, and the positive constants $\kappa$ and $\ell $ are to be determined.

Suppose $\|\rho_0\|_{4}\leq \kappa $.  Now we define a map $\mc{T}: \BB\rightarrow S_{\mathcal F^W}^{\infty} ([0,T];L^4(\RR^d))$ as follows:  For each $\xi\in \BB$, let $\mc{T}(\xi):=\rho^\xi$ be the solution to the  following linear SPDE:
\begin{equation}\label{SPDE-linear}
\left\{
\begin{array}{l}
\ud \rho_t
=\left[ \frac{1}{2}\sum_{i,j=1}^dD_{ij}(\rho_t\sum_{k=1}^{d'}(\nu_t^{ik}\nu_t^{jk}+ \sigma_t^{ik}\sigma_t^{jk}))
-\chi\nabla\cdot((\nabla\mc{G}\ast \xi_t) \rho_t)
\right]\dt
-\sum_{i=1}^dD_i( \rho_t \sum_{k=1}^{d'}\sigma_t^{ik}\ud W_t^k),\\
\rho_0\text{ is given}\,.
\end{array}\right.
\end{equation}

 Indeed, as Assumption \ref{assum1} holds with $m=2$, one may write SPDE \eqref{SPDE-linear} as a non-divergence form:
\begin{equation}\label{SPDE-linear-non-divergence-form}
\left\{
\begin{array}{l}
\ud \rho_t
=\left[\frac{1}{2}\sum_{i,j=1}^d\sum_{k=1}^{d'}(\nu_t^{ik}\nu_t^{jk}+ \sigma_t^{ik}\sigma_t^{jk})D_{ij}\rho_t\dt 
+F_t(\rho_t)
\right]\dt-\sum_{i=1}^d \sum_{k=1}^{d'} \sigma_t^{ik} D_i \rho_t\ud W_t^k,\\
\rho_0\text{ is given},
\end{array}\right.
\end{equation}
with 
\begin{align*}
F_t(\rho_t):&=\frac{1}{2}\sum_{i,j=1}^d D_{i}\left(\rho_t\sum_{k=1}^{d'}D_j(\nu_t^{ik}\nu_t^{jk}+ \sigma_t^{ik}\sigma_t^{jk})\right) +\frac{1}{2}\sum_{i,j=1}^d 
D_j\rho_t\sum_{k=1}^{d'}D_i(\nu_t^{ik}\nu_t^{jk}+ \sigma_t^{ik}\sigma_t^{jk}) \\
&\quad
-\chi\nabla\cdot((\nabla\mc{G}\ast \xi_t) \rho_t),
\end{align*} 
where we have used Assumption \ref{assum1} (iii) for the stochastic integral, i.e.,
\begin{align}
\sum_{j=1}^dD_j(\sigma_s^{jk} \rho_s)=\sum_{j=1}^d\rho_s D_j\sigma_s^{jk} +\sum_{j=1}^dD_j\rho_s\sigma_s^{jk} =\sum_{j=1}^dD_j\rho_s \sigma_s^{jk}.
\label{use-ass-iii}
\end{align}
For each $\xi\in \mathbb B$ and $\rho_t\in L^p(\RR^d)$ with $p\in\{2,4\}$, relation \eqref{embedding-inf} indicates that 
\begin{align*}
\| (\nabla\mc{G}\ast\xi_t)\rho_t\|_{p}
&\leq \|\nabla\mc{G}\ast\xi_t\|_{{\infty}} \|\rho_t\|_{p} \leq S_d\norm{\xi_t}_4\|\rho_t\|_{p}\leq \ell  S_d\kappa\|\rho_t\|_{p}, \quad\text{a.s., for all }t\in[0,T].
\end{align*}
This together with Assumption \ref{assum1} allows us, through standard computations, to check that the conditions of the $L^p$-theory of SPDE (see \cite[Theorems 5.1 and 7.1]{krylov1999analytic} for the case when $n=-1$ therein) and the maximum principle (\cite[Theorem 5.12]{krylov1999analytic}) are satisfied and we conclude that the linear SPDE \eqref{SPDE-linear} admits a unique   solution  $\rho^{\xi}$ which is nonnegative and lying in
 $L^p_{\mathcal F^W} (0,T;W^{1,p}(\RR^d)) \cap S^p_{\mathcal F^W} ([0,T];L^p(\RR^d))$, $p\in\{2,4\}$. 

Next we check that $\rho^{\xi}\in S^\infty_{\mathcal F^W}([0,T];L^1\cap L^4(\RR^d))$ and without causing confusion we drop the superscript $\xi$. It is easy to see that the solution of \eqref{SPDE-linear} has the property of conservation of mass, i.e.
\begin{equation*}
\norm{\rho_t}_1=\norm{\rho_0}_1=1\quad \text{a.s.}\,.
\end{equation*}
Applying the It\^o formula for $L^p$-norms in \cite[Theorem 2.1]{krylov2010ito} we have for any $0<t\leq T$
\begin{align}
&\|\rho_{t}\|_{4}^4-\|\rho_0\|_{4}^4   \nonumber\\
=&\int_0^{t } \bigg(\sum_{i,j=1}^d\sum_{k=1}^{d'}-\left\langle 6 |\rho_s|^2 D_i\rho_s,\, D_j\left( (\nu_s^{ik}\nu_s^{jk}+\sigma_s^{ik}\sigma_s^{jk})\rho_s\right) \right\rangle 
+ 6\sum_{k=1}^{d'}\left\langle |\rho_s|^2 ,\, |\sum_{j=1}^dD_j(\rho_s\sigma_s^{jk} )|^2\right\rangle \notag \\
&\quad\quad+12\chi\left\langle \rho_s(\nabla\rho_s),( \nabla\mc{G}\ast\xi_s) \rho_s ^2  \right\rangle \bigg)\,ds 
+12\sum_{i=1}^d\sum_{k=1}^{d'}\int_0^{t}
\left\langle |\rho_s|^2 D_i\rho_s,\,\sigma_s^{ik}\rho_s \right\rangle\,dW^k_s \quad \text{a.s.}\,.  \label{eq-rmk-iii}
\end{align}
Due to (iii) in Assumption \ref{assum1}, we know that  for $k=1,2,\dots,d'$,
$$
12 \sum_{i=1}^d\left\langle |\rho_s|^2 D_i\rho_s,\,\sigma^{ik}\rho_s \right\rangle
= 3\sum_{i=1}^d\left\langle D_i\left (|\rho_s|^4\right) ,\,\sigma^{ik}_s \right\rangle
=-3\left\langle |\rho_s|^4 ,\,\sum_{i=1}^dD_i\sigma^{ik}_s \right\rangle=0\,.
$$
Thus one  has
\begin{align*}
&\|\rho_{t}\|_{4}^4-\|\rho_0\|_{4}^4   \nonumber\\
=&-\int_0^{t} \sum_{i,j=1}^d\sum_{k=1}^{d'}  \left\langle 6 |\rho_s|^2 D_i\rho_s,\, (\nu^{ik}_s\nu_s^{jk}+\sigma_s^{ik}\sigma_s^{jk})D_j\rho_s \right\rangle \,ds
-\sum_{i,j=1}^d\sum_{k=1}^{d'}\int_0^{t}\left\langle 6 |\rho_s|^2 D_i\rho_s,\, D_j(\nu^{ik}_s\nu_s^{jk}+\sigma_s^{ik}\sigma_s^{jk})\rho_s \right\rangle\,ds \nonumber\\
&
+\int_0^{t} 6 \sum_{k=1}^{d'}  \left\langle |\rho_s|^2 ,\, |\sum_{j=1}^dD_j(\rho_s\sigma_s^{jk} ) |^2\right\rangle 
+12\chi\left\langle \rho_s(\nabla\rho_s),( \nabla\mc{G}\ast\xi_s) \rho_s ^2  \right\rangle\,ds \quad \text{a.s.}\,.
\end{align*}
Using (iii) in Assumption \ref{assum1} as in \eqref{use-ass-iii} again
 yields that
\begin{align*}
&-\sum_{i,j=1}^d\sum_{k=1}^{d'}  \left\langle 6 |\rho_s|^2 D_i\rho_s,\, (\nu^{ik}_s\nu_s^{jk}+\sigma_s^{ik}\sigma_s^{jk})D_j\rho_s \right\rangle\\
=&-\sum_{i,j=1}^d\sum_{k=1}^{d'}  \left\langle 6 |\rho_s|^2 D_i\rho_s,\, (\nu^{ik}_s\nu_s^{jk})D_j\rho_s \right\rangle-\sum_{k=1}^{d'}  \left\langle 6 |\rho_s|^2  ,\, |\sum_{j=1}^dD_j\rho_s \sigma_s^{jk}|^2\right\rangle\\
=&-\sum_{i,j=1}^d\sum_{k=1}^{d'}  \left\langle 6 |\rho_s|^2 D_i\rho_s,\, (\nu^{ik}_s\nu_s^{jk})D_j\rho_s \right\rangle-\sum_{k=1}^{d'}  \left\langle 6 |\rho_s|^2  ,\,|\sum_{j=1}^dD_j(\rho_s\sigma_s^{jk} ) |^2\right\rangle\,.
\end{align*}
Therefore it holds that
\begin{align}
&\|\rho_{t}\|_{4}^4-\|\rho_0\|_{4}^4   \nonumber\\
=&-\int_0^{t}  \sum_{i,j=1}^d\sum_{k=1}^{d'}   \left\langle 6 |\rho_s|^2 D_i\rho_s,\, (\nu^{ik}_s\nu_s^{jk})D_j\rho_s \right\rangle \,ds
-\int_0^{t}\sum_{i,j=1}^d\sum_{k=1}^{d'} \left\langle 6 |\rho_s|^2 D_i\rho_s,\, D_j(\nu^{ik}_s\nu_s^{jk}+\sigma_s^{ik}\sigma_s^{jk})\rho_s \right\rangle\,ds \nonumber\\
&
+12 \chi\int_0^{t} 
\left\langle \rho_s(\nabla\rho_s),( \nabla\mc{G}\ast\xi_s) \rho_s ^2  \right\rangle\,ds 
, \quad \text{a.s.}\,. \label{eq:ito-formula}
\end{align}
 
 It follows from $(i)$ in Assumption \ref{assum1} that
 \begin{equation*}
-\sum_{i,j=1}^d\sum_{k=1}^{d'}   \left\langle 6 |\rho_s|^2 D_i\rho_s,\, (\nu^{ik}_s\nu_s^{jk})D_j\rho_s \right\rangle\leq -6\lambda  \|\rho_s\nabla\rho_s\|_{2}^2\,,
 \end{equation*}
 and by $(ii)$ in Assumption \ref{assum1} one has
 \begin{align}
 -6\sum_{i,j=1}^d\sum_{k=1}^{d'} \left\langle  |\rho_s|^2 D_i\rho_s,\, D_j(\nu^{ik}_s\nu_s^{jk}+\sigma_s^{ik}\sigma_s^{jk})\rho_s \right\rangle
 &\leq 24\Lambda^2 
 \left\langle  |\rho_s|^2 |\nabla \rho_s|,\, \rho_s \right\rangle
  \label{x-q-est}
   \\
 &\leq  2\lambda\|\rho_s\nabla\rho_s\|_{2}^2+\frac{(12\Lambda^2 )^2}{2\lambda}\|\rho_s\|_{4}^4
 \,.\nonumber
 \end{align}
 We also notice that
\begin{align}
12\chi\left\langle \rho_s(\nabla\rho_s),( \nabla\mc{G}\ast\xi_s) \rho_s ^2  \right\rangle
&\leq
12\chi \|\rho_s\nabla\rho_s\|_{2} \|\rho_s\|_{4}^2 \|\nabla\mc{G}\ast\xi_s\|_{{\infty}}     \nonumber\\
\text{(by relation \eqref{embedding-inf})}&\leq
12\chi S_d \|\rho_s\nabla\rho_s\|_{2} \|\rho_s\|_{4}^{2}\norm{\xi_s}_4\leq
12\ell\chi S_d\kappa\|\rho_s\nabla\rho_s\|_{2} \|\rho_s\|_{4}^{2}  \nonumber \\
\text{(by Young's inequality)} 
&\leq
2\lambda  \|\rho_s\nabla \rho_s\|_{2}^2 + \frac{(6\ell\chi S_d\kappa)^2}{2\lambda}\|\rho_s\|_{4}^{4} \,.\label{est-F-11}
\end{align}
Collecting above estimates, \eqref{eq:ito-formula} yields that
\begin{align}
&\|\rho_{t}\|_{4}^4-\|\rho_0\|_{4}^4\notag\\
\leq&
-6\lambda  \int_0^{t} \|\rho_s\nabla\rho_s\|_{2}^2\,ds 
+4\lambda\int_0^{t}\|\rho_s\nabla\rho_s\|_{2}^2\,ds 
+
 \left(\frac{(12\Lambda^2)^2}{2\lambda}+\frac{(6\ell\chi S_d\kappa)^2}{2\lambda}\right) 
 \int_0^{t} \|\rho_s\|_{4}^{4} \,ds \notag\\
 \leq&  \left(\frac{(12\Lambda^2)^2}{2\lambda}+\frac{(6\ell\chi S_d\kappa)^2}{\lambda}\right) 
 \int_0^{t} \|\rho_s\|_{4}^{4} \,ds\,. \label{eq-ell-ell}
\end{align}
 Take a sufficiently large $\ell>1$ and relatively small $\kappa_0$\footnote{The selections of $\ell$ and $ \kappa$ are not unique; a particular case  is to take $\kappa_0\leq \frac{1}{\chi \ell}$ with 
	$$
	\ell= \exp\left\{
	\frac{T}{4} \left(\frac{(12\Lambda^2)^2}{2\lambda}+\frac{(6 S_d)^2}{\lambda}\right) 
	\right\}.
	$$} such that whenever $\kappa\leq \kappa_0$ it holds that
\begin{align}
\exp\left\{
\frac{T}{4} \left(\frac{(12\Lambda^2)^2}{2\lambda}+\frac{(6\ell\chi S_d\kappa)^2}{\lambda}\right) 
\right\}
\leq \ell.\label{select-ell}
\end{align}
Applying Gronwall's inequality to \eqref{eq-ell-ell} yields that
\begin{align*}
\sup_{t\in[0,T]} \|\rho_{t}\|_{4} &\leq \|\rho_0\|_{4} \exp\left\{
\frac{T}{4} \left(\frac{(12\Lambda^2)^2}{2\lambda}+\frac{(6\ell\chi S_d\kappa)^2}{\lambda}\right) 
\right\},\\
&\leq \ell \kappa,
\end{align*}
which gives that $\rho\in \mathbb B$.  

Fix the constants $\ell$ and $\kappa_0$ as selected above. Let $\kappa\leq \kappa_0$. For all $\xi\in\BB$, let $\rho^{\xi}$ be the unique solution of the linear SPDE \eqref{SPDE-linear}. From the discussion above, we get the solution map
$$
\mathcal T:\quad  \mathbb B \rightarrow \mathbb B, \quad \xi\mapsto \rho^{\xi}.
$$
Next we show that the map $\mc{T}$ is a contraction.

For any $\bar\xi,\, \hat\xi\in \mathbb B$, set $\delta \rho=\rho^{\bar\xi}-\rho^{\hat\xi}$ and $\delta \xi=\bar\xi-\hat\xi$. As before, we apply It\^o formula for the $L^4$-norm of $\delta \rho$:
\begin{align}	
	&\|\delta\rho_{t}\|_{4}^4   \nonumber\\
	=&
	\int_0^{t}\left(-\sum_{i,j=1}^d\sum_{k=1}^{d'}\left\langle 6 |\delta\rho_s|^2 D_i\delta\rho_s,\, D_j\left((\nu_s^{ik}\nu_s^{jk}+\sigma_s^{ik}\sigma_s^{jk})\delta\rho_s\right) \right\rangle 
	+ 6\sum_{k=1}^{d'}\left\langle |\delta\rho_s|^2 ,\, |\sum_jD_j(\delta\rho_s\sigma_s^{jk} )|^2\right\rangle \right)\,ds
	\nonumber\\
	&
	+\int_0^t12\chi\left\langle |\delta\rho_s|^2  \nabla\delta\rho_s, \nabla\mc{G}\ast \bar\xi_s \rho^{\bar\xi}_s- \nabla\mc{G}\ast \hat\xi_s \rho^{\hat\xi}_s  \right\rangle\,ds 
	+
	12\sum_{i=1}^d\sum_{k=1}^{d'}\int_0^{t}
	\left\langle |\delta \rho_s|^2 D_i\delta\rho_s,\,\sigma_s^{ik}\delta\rho_s \right\rangle\,dW^k_s
	\nonumber\\
	=&
	\int_0^{t}\left(-\sum_{i,j=1}^d\sum_{k=1}^{d'}  \left\langle 6 |\delta\rho_s|^2 D_i\delta\rho_s,\, (\nu_s^{ik}\nu_s^{jk})D_j\delta\rho_s\right\rangle 
	-\sum_{i,j=1}^d\sum_{k=1}^{d'} \left\langle 6 |\delta\rho_s|^2 D_i\delta\rho_s,\, D_j(\nu^{ik}_s\nu_s^{jk}+\sigma_s^{ik}\sigma_s^{jk})\delta\rho_s \right\rangle \right)\,ds
	\nonumber\\
	&
	+    \int_0^t12\chi\left\langle |\delta\rho_s|^2  \nabla \delta\rho_s, \nabla\mc{G}\ast \bar\xi_s \rho^{\bar\xi}_s- \nabla\mc{G}\ast \hat\xi_s \rho^{\hat\xi}_s  \right\rangle\,ds
	\nonumber\\
	\leq&
	-6\lambda  \int_0^{t} \|\delta \rho_s\nabla\delta\rho_s\|_2^2\,ds 
	+2\lambda\int_0^t\|\delta\rho_s\nabla\delta\rho_s\|_{2}^2\,ds+\frac{(12\Lambda^2 )^2}{2\lambda}\int_0^t\|\delta\rho_s\|_{4}^4\,ds
	\label{eq:ito-delta}
	\\
	&\quad
	+\int_0^t12\chi\left\langle |\delta\rho_s|^2  \nabla \delta\rho_s, \nabla\mc{G}\ast \bar\xi_s \rho^{\bar\xi}_s- \nabla\mc{G}\ast \hat\xi_s \rho^{\hat\xi}_s  \right\rangle\,ds,\quad\text{a.s.}. 
	\nonumber
\end{align}
Let us compute that
\begin{align*}
12\chi\left\langle |\delta\rho_s|^2  \nabla \delta\rho_s, \nabla\mc{G}\ast \delta\xi_s \rho^{\bar\xi}_s   \right\rangle
&
\leq 12\chi\|\delta\rho_s \nabla\delta\rho_s\|_{2}  \|  \delta\rho_s\rho^{\bar\xi}_s\|_{2} \| \nabla\mc{G}\ast\delta\xi_s  \|_{{\infty}}\\
&\leq 
12 \chi S_d\|\delta\rho_s \nabla\delta\rho_s\|_{2}  \|  \delta\rho_s\|_{4}\|\rho^{\bar\xi}_s\|_{4}\norm{ \delta\xi_s }_4
\\
&\leq 2\lambda\|\delta\rho_s \nabla\delta\rho_s\|_{2}^2+\frac{(6\chi S_d)^2}{2\lambda}\|  \delta\rho_s\|_{4}^2\|\rho^{\bar\xi}_s\|_{4}^2\norm{ \delta\xi_s }_4^2
\\
&\leq 
	2\lambda\|\delta\rho_s \nabla\delta\rho_s\|_{2}^2+\frac{(6\chi S_d)^2}{4\lambda}
\left((\ell\kappa)^{2}\| \delta\rho_s\|_{4}^4+(\ell\kappa)^{-2}\|\rho^{\bar\xi}_s\|_{4}^4\norm{ \delta\xi_s }_4^4\right)
\\
&\leq
 2\lambda\|\delta\rho_s \nabla\delta\rho_s\|_{2}^2+\frac{(6\ell\chi S_d\kappa)^2}{4\lambda}\| \delta\rho_s\|_{4}^4+\frac{(6\ell\chi S_d\kappa)^2}{4\lambda}\| \delta\xi_s\|_{4}^4\,.
\end{align*}
In a similar way to \eqref{est-F-11}, we have
\begin{align*}
12\chi\left\langle |\delta\rho_s|^2  \nabla \delta\rho_s,  \nabla\mc{G}\ast\hat\xi_s \delta\rho_s  \right\rangle
\leq
2\lambda  \|\delta\rho_s\nabla \delta\rho_s\|_{2}^2 + \frac{(6\ell\chi S_d\kappa)^2}{2\lambda}\|\delta\rho_s\|_{4}^4\,.
\end{align*}
Thus, combining above estimates gives
\begin{align}\label{314}
&12\chi\left\langle |\delta\rho_s|^2  \nabla \delta\rho_s, \nabla\mc{G}\ast \bar\xi_s \rho^{\bar\xi}_s- \nabla\mc{G}\ast \hat\xi_s \rho^{\hat\xi}_s  \right\rangle
=
12\chi\left\langle |\delta\rho_s|^2  \nabla \delta\rho_s, \nabla\mc{G}\ast\delta\xi_s \rho^{\bar\xi}_s+ \nabla\mc{G}\ast \hat\xi_s \delta\rho_s  \right\rangle \notag\\
\leq&
 4\lambda  \|\delta\rho_s\nabla \delta\rho_s\|_{2}^2 + \frac{(6\ell\chi S_d\kappa)^2}{\lambda}\|\delta\rho_s\|_{4}^4+ \frac{(6\ell\chi S_d\kappa)^2}{4\lambda}\|\delta\xi_s\|_{4}^4\,,
\end{align}
which together with \eqref{eq:ito-delta} and \eqref{select-ell} implies
\begin{align*}
\|\delta\rho_{t}\|_{4}^4 
&\leq
\left(\frac{(12\Lambda^2 )^2}{2\lambda}+\frac{(6\ell\chi S_d\kappa)^2}{\lambda}\right)	\int_0^t\|\delta\rho_s\|_{4}^4 \,ds +\frac{(6\ell\chi S_d\kappa)^2}{4\lambda} \int_0^t \|\delta\xi_s\|_{4}^4\,ds\\
&\leq \frac{4\ln \ell}{T}\int_0^t\|\delta\rho_s\|_{4}^4 \,ds +\frac{(6\ell\chi S_d\kappa)^2}{4\lambda} \int_0^t \|\delta\xi_s\|_{4}^4\,ds\quad \text{a.s.}\,.
\end{align*}
 By Gronwall's inequality, we get
\begin{align}
\|\delta \rho\|_{S_{\mc F^W}^{\infty}([0,T];L^4(\RR^d))}  
& \leq \left[\frac{(6\ell\chi S_d\kappa)^2T}{4\lambda}\right]^{\frac{1}{4}}\ell\|\delta \xi\|_{S_{\mc F^W}^{\infty}([0,T];L^4(\RR^d))}\quad \text{a.s.}\,.
\label{q-est-thm}
\end{align}
 Hence, whenever  $0<\kappa<\kappa_0 \wedge \left[\frac{4\lambda}{(6\ell^3\chi S_d)^2T}\right]^{\frac{1}{2}}$, the solution map $\mathcal T$ is a contraction mapping on the complete metric space $\mathbb B$, and it admits a unique fixed point $\rho=\rho^{\rho}$ which is the unique   solution to SPDE \eqref{SPDE}.
\end{proof}

\begin{rmk}\label{rmk-ass-iii}
For the well-posedness of SPDE \eqref{SPDE}, the main difficulty lies in the KS type nonlinear term $-\chi\nabla\cdot((\nabla\mc{G}\ast \rho_t) \rho_t)$ which prevents us from using the existing methods in the SPDE literature. In view of equation \eqref{eq-rmk-iii} and the computation that follows, one may see that the stochastic integral there equals zero because of  the divergence-free condition  (iii) of Assumption \ref{assum1}. This further allows us to obtain $\rho\in S_{\mathcal F^W}^\infty(0,T;L^4(\RR^d))$ which finally yields the conclusions in Theorem \ref{prop-weak-soltn} with a \textit{deterministic} $\kappa$. Without (iii) of Assumption \ref{assum1}, one may try to generalize the localization technique with stopping times  (see \cite[Chapter 1, Section 5]{KaratzasShreve1998}) for random fields which, however, may incur cumbersome arguments not just for the well-posedness of SPDE \eqref{SPDE} in this section, but also for the subsequent sections.
\end{rmk}

In view of the above proof, we can particularly take
\begin{align*}
0< \chi\kappa<\frac{1}{ \ell} \wedge  \left[\frac{4\lambda}{(6\ell^3 S_d)^2T}\right]^{\frac{1}{2}},\quad \text{with }	\ell= \exp\left\{
	\frac{T}{4} \left(\frac{(12\Lambda^2)^2}{2\lambda}+\frac{(6 S_d)^2}{\lambda}\right) 
	\right\},
\end{align*}
for the well-posedness of SPDE \eqref{SPDE} in Theorem \ref{prop-weak-soltn}. Therefore, whenever $ \chi\|\rho_0\|_{4}<\frac{1}{ \ell} \wedge  \left[\frac{4\lambda}{(6\ell^3 S_d)^2T}\right]^{\frac{1}{2}}$, the unique existence of solution in $\mathbb M$ can be asserted as in Theorem \ref{prop-weak-soltn}. 

Furthermore, suppose that the diffusion coefficients $\nu$ and $\sigma$ are spatial invariant, i.e.,
\begin{align}
\text{the measurable diffusion coefficients } 
\sigma,\,\nu: \left([0,T],\mathcal B([0,T]\right) \longrightarrow \left(\RR^{d\times d'},\mathcal B(\RR^{d\times d'})\right). \label{spatial-invariant}
\end{align}
Then the left-hand side of \eqref{x-q-est} and the third term of line \eqref{eq:ito-delta} will vanish. Repeating the proof and combining computations around \eqref{select-ell} and \eqref{q-est-thm}, we can obtain the well-posedness of SPDE \eqref{SPDE} in Theorem \ref{prop-weak-soltn} with a particular selection:
\begin{align}
0<\chi \kappa \sqrt{T}<\frac{1}{\ell} \wedge  \left[\frac{4\lambda}{(6\ell^3 S_d)^2}\right]^{\frac{1}{2}},\quad \text{with }	\ell= \exp\left\{
	\frac{(6 S_d)^2}{ 4 \lambda}
	\right\},
	\label{xx-est-good}
\end{align}
which indicates that for any given $\rho_0$, the existence and uniqueness of solution may be guaranteed on time interval $[0,T_0]$ if
$$
T_0< \frac{1}{ \ell^2 \chi ^2 \|\rho_0\|_4^2} \wedge  \frac{4\lambda}{(6\ell^3\chi \|\rho_0\|_4 S_d)^2}.
$$
For this solution on $[0,T_0]$, we may conduct estimates as in the proof of Theorem \ref{prop-weak-soltn}.
Notice that instead of \eqref{est-F-11} and \eqref{eq-ell-ell}, we have
\begin{align*}
12\chi\left\langle \rho_s(\nabla\rho_s),( \nabla\mc{G}\ast\rho_s) \rho_s ^2  \right\rangle
&\leq
12\chi \|\rho_s\nabla\rho_s\|_{2} \|\rho_s\|_{4}^2 \|\nabla\mc{G}\ast\rho_s\|_{{\infty}}     \nonumber\\
 &\leq
12\chi S_d \|\rho_s\nabla\rho_s\|_{2} \|\rho_s\|_{4}^{2}\norm{\rho_s}_4
  \nonumber \\
&\leq
 2\lambda  \|\rho_s\nabla \rho_s\|_{2}^2 + \frac{(6\chi S_d)^2}{2\lambda}\|\rho_s\|_{4}^{6},
\end{align*}
and
\begin{align}
&\|\rho_{t}\|_{4}^4-\|\rho_0\|_{4}^4\notag\\
\leq&
-6\lambda  \int_0^{t} \|\rho_s\nabla\rho_s\|_{2}^2\,ds 
+2\lambda\int_0^{t}\|\rho_s\nabla\rho_s\|_{2}^2\,ds 
+ \frac{(6 \chi S_d)^2}{2\lambda} 
 \int_0^{t} \|\rho_s\|_{4}^{4} \,ds \notag\\
 \leq& -{\lambda}  \int_0^{t} \|\nabla(\rho_s^2)\|_{2}^2\,ds 
 + \frac{(6\chi S_d)^2}{2\lambda} 
 \int_0^{t} \|\rho_s\|_{4}^{6} \,ds\,. \label{xx-q-est-rho}
\end{align}
Meanwhile, using the Gagliardo-Nirenberg inequality yields that there exists a constant $N_d>0$ depending on $d$ such that
\begin{align*}
\|\rho_s\|_4^2=\|\rho^2_s\|_2 
\leq N_d \|\nabla (\rho^2_s)\|_2^{\frac{d}{d+2}}\cdot \|\rho_s^2\|^{\frac{2}{d+2}}_1
&=
 N_d \|\nabla (\rho^2_s)\|_2^{\frac{d}{d+2}}\cdot \|\rho_s\|^{\frac{4}{d+2}}_2
 \\
 \text{(by interpolation inequality) }&\leq
 N_d \|\nabla (\rho^2_s)\|_2^{\frac{d}{d+2}}\cdot \|\rho_s\|^{\frac{8}{3(d+2)}}_4 
 \cdot \|\rho_s\|^{\frac{4}{3(d+2)}}_1\\
 &=N_d \|\nabla (\rho^2_s)\|_2^{\frac{d}{d+2}}\cdot \|\rho_s\|^{\frac{8}{3(d+2)}}_4.
\end{align*}
Then it follows that 
\begin{align*}
\|\nabla (\rho^2_s)\|^2_2
&\geq 
|N_d|^{-\frac{2(d+2)}{d}} \cdot  \|\rho_s\|_4^{4+\frac{8}{3d}}, 
\end{align*}
which inserted into \eqref{xx-q-est-rho} gives
 \begin{align}
\|\rho_{t}\|_{4}^4-\|\rho_0\|_{4}^4
 \leq& \int_0^{t}  \|\rho_s\|_4^{4+\frac{8}{3d}} \left(
    \frac{(6\chi S_d)^2}{2\lambda}  \cdot  \|\rho_s\|_4^{2-\frac{8}{3d}}  
    -\frac{\lambda }{ |N_d|^{\frac{2(d+2)}{d}}} 
 \right)
 \,ds\,.   \label{xx-q-est-global}
\end{align}
Therefore, if 
$$
 \frac{(6\chi S_d)^2}{2\lambda}  \cdot  \|\rho_0\|_4^{2-\frac{8}{3d}}  
    -\frac{\lambda }{ |N_d|^{\frac{2(d+2)}{d}}} \leq 0
$$
i.e.,
\begin{align}
\|\rho_0\|_4 \leq \left(
\frac{2\lambda^2}{ |N_d|^{\frac{2(d+2)}{d}}(6\chi S_d)^2    }
\right)^{\frac{3d}{6d-8}}, \label{xx-q-est-global-condtn}
\end{align}
then we conclude from \eqref{xx-q-est-global} that $\|\rho_t\|_4 \leq \|\rho_0\|_4$ for all $t\in [0,T_0]$ and that the unique solution may actually be extended to any finite time interval $[0,T]$. 
\begin{cor}\label{cor-global-solution}
Let Assumption \ref{assum1} hold with $m=2$ and the diffusion coefficients $\nu$ and $\sigma$ being spatial invariant (see \eqref{spatial-invariant}).  Assume $0\leq \rho_0\in L^1\cap \mc{H}^{\frac{1}{2}}_{4}(\RR^d)$ with $\norm{\rho_0}_1=1$. There exists a constant $\kappa>0$ depending only on $\chi,\lambda,$ and $ d$ such that if $\|\rho_0\|_{4}\leq \kappa$, SPDE \eqref{SPDE} admits a unique nonnegative   solution in
\begin{equation*}
L^2_{\mathcal F^W} (0,T;W^{1,2}(\RR^d))\cap L^4_{\mathcal F^W} (0,T;W^{1,4}(\RR^d)) \cap S_{\mathcal F^W}^{\infty}([0,T];L^1\cap L^4(\RR^d)), 
\end{equation*}
for all $T>0$.
\end{cor}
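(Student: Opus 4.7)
The plan is to combine the short-time unique existence already asserted in Theorem \ref{prop-weak-soltn} with a global-in-time a priori $L^4$ bound that makes the local solution extendable to any horizon $[0,T]$. First, I would specialize Theorem \ref{prop-weak-soltn} to the spatial-invariant setting: when $\sigma_t,\nu_t$ depend only on $t$, the derivatives $D_j(\nu^{ik}_s\nu_s^{jk}+\sigma^{ik}_s\sigma_s^{jk})$ vanish, so the ``drift-correction'' term controlled in \eqref{x-q-est} and the corresponding term in \eqref{eq:ito-delta} disappear altogether. Running the same contraction-mapping argument as in the proof of Theorem \ref{prop-weak-soltn} with these simplifications yields a unique nonnegative solution on a short interval $[0,T_0]$ under the sharpened quantitative threshold \eqref{xx-est-good}; in particular $T_0$ depends only on $\chi,\lambda,d$ and on $\|\rho_0\|_4$.

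Next, on $[0,T_0]$ I would establish the key a priori bound $\|\rho_t\|_4\le \|\rho_0\|_4$. Applying the Krylov It\^o formula for $L^p$-norms to $\|\rho_t\|_4^4$ and using Assumption \ref{assum1}(iii) to eliminate the stochastic integral (exactly as in the proof of Theorem \ref{prop-weak-soltn}), spatial invariance of $\sigma,\nu$ reduces the deterministic balance to the ellipticity contribution plus the chemotactic drift, yielding \eqref{xx-q-est-rho}. Combining mass conservation $\|\rho_s\|_1=1$ with the Gagliardo-Nirenberg inequality applied to $\rho_s^2$ and an elementary $L^p$-interpolation produces the lower bound
\begin{equation*}
\|\nabla(\rho_s^2)\|_2^2\;\ge\; |N_d|^{-\frac{2(d+2)}{d}}\,\|\rho_s\|_4^{4+\frac{8}{3d}},
\end{equation*}
which inserted into \eqref{xx-q-est-rho} gives \eqref{xx-q-est-global}.

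Choose $\kappa$ so that \eqref{xx-q-est-global-condtn} holds, namely
$\kappa=\bigl(\frac{2\lambda^{2}}{|N_d|^{2(d+2)/d}(6\chi S_d)^{2}}\bigr)^{3d/(6d-8)}$. Then whenever $\|\rho_0\|_4\le \kappa$, the integrand in \eqref{xx-q-est-global} remains nonpositive as long as $\|\rho_s\|_4\le \|\rho_0\|_4$; a standard continuity/Gronwall argument therefore propagates $\|\rho_t\|_4\le \|\rho_0\|_4\le \kappa$ throughout $[0,T_0]$. Since the local existence time $T_0$ provided by the first step depends only on the $L^4$-size of the initial datum, and that size is preserved, I can restart the local theorem from $\rho_{T_0}$ (noting that $\rho_{T_0}$ is $\mathcal F^W_{T_0}$-measurable and satisfies $\|\rho_{T_0}\|_1=1$, $\|\rho_{T_0}\|_4\le \kappa$, with the required $\mathcal H^{1/2}_4$-regularity ensured by the parabolic smoothing of SPDE \eqref{SPDE-linear}). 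Iterating finitely many times then extends the solution to any prescribed $[0,T]$, and uniqueness in $\mathbb M$ is inherited on each subinterval from Theorem \ref{prop-weak-soltn}.

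The main obstacle, and the point where the hypothesis of spatial invariance is essential, is the extension step: in the general setting the term $\frac{(12\Lambda^2)^2}{2\lambda}\|\rho_s\|_4^4$ appearing in \eqref{eq-ell-ell} is \emph{independent} of $\|\rho_0\|_4$ and forces the $L^4$-norm to grow exponentially at a rate that we cannot absorb into the Gagliardo-Nirenberg dissipation, so no smallness of $\|\rho_0\|_4$ would yield a closed a priori bound. Killing that term by spatial invariance is precisely what allows the Gagliardo-Nirenberg argument to produce the sign condition \eqref{xx-q-est-global-condtn}, and hence the global existence.
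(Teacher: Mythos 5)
Your proposal is correct and follows essentially the same route as the paper: specialize the contraction argument of Theorem \ref{prop-weak-soltn} to spatially invariant coefficients with the threshold \eqref{xx-est-good} for local solvability on $[0,T_0]$, then close the a priori bound $\|\rho_t\|_4\le\|\rho_0\|_4$ via mass conservation, Gagliardo--Nirenberg and the sign condition \eqref{xx-q-est-global-condtn}, and iterate to reach any $[0,T]$. Your explicit restarting step (measurability of $\rho_{T_0}$, preservation of the $L^4$ size, regularity needed to reapply the local theorem) merely spells out what the paper leaves implicit when it says the solution ``may actually be extended,'' and your remark on why the $\Lambda$-dependent term in \eqref{eq-ell-ell} obstructs the general case matches the paper's rationale.
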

In  Corollary \ref{cor-global-solution}, the constant $\kappa$ may be given as the right-hand side of \eqref{xx-q-est-global-condtn} that is independent of $(T,\Lambda)$ and the global solution result with small initial value under $L^4$-norm seems to hold in a similar way as the deterministic counterparts (see \cite{blanchet2006two,corrias2004global,biler2010blowup} for instance). The results in Theorem \ref{prop-weak-soltn}, Corollary \ref{cor-global-solution}, and subsequent theorems,  may be  extended to general $L^p$-norms for $p>3$, which would not be discussed in this paper to avoid cumbersome arguments.

To explore the connections between the stochastic Keller-Segel equation \eqref{SPDE} and associated SDEs  of McKean-Vlasov type \eqref{SDE}, we need stronger regularity of the solution.


\begin{thm}\label{thm-wellposedness}
Let Assumption \ref{assum1} hold with $m=3$. Suppose further $\rho_0 \in L^1\cap W^{2,2}(\RR^d)$. 
Then for any $T>0$, there exists $\kappa>0$ depending only on $T,\Lambda,\lambda,\chi$ and $ d$ such that if $\|\rho_0\|_{4}\leq \kappa$, SPDE \eqref{SPDE} admits a unique nonnegative solution in
$$\mathbb M_1:=L^2_{\mathcal F^W} (0,T;W^{3,2}(\RR^d)) \cap S_{\mc F^W}^2 ([0,T];W^{2,2}(\RR^d)) \cap L^4_{\mathcal F^W} (0,T;W^{1,4}(\RR^d)) \cap S_{\mc F^W}^{\infty}([0,T];L^1\cap L^4(\RR^d)).$$
\end{thm}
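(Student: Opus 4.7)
The plan is to upgrade the solution produced by Theorem \ref{prop-weak-soltn} by running a refined contraction argument in a more restrictive space. Concretely, for constants $\kappa,\ell>0$ chosen as in Theorem \ref{prop-weak-soltn} and an auxiliary constant $M>0$ depending on $\|\rho_0\|_{W^{2,2}}$, define the admissible set
$$\mathbb{B}_1 := \left\{u\in S^\infty_{\mathcal{F}^W}([0,T];L^4)\cap S^2_{\mathcal{F}^W}([0,T];W^{2,2}) :\ \|u\|_{S^\infty([0,T];L^4)}\leq \ell\kappa,\ \|u\|_{S^2([0,T];W^{2,2})}\leq M\right\},$$
and for each $\xi\in\mathbb{B}_1$ let $\rho^\xi$ solve the linearized SPDE \eqref{SPDE-linear}. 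Since Assumption \ref{assum1} now holds with $m=3$ and $\rho_0\in W^{2,2}$, Krylov's $L^p$-theory (\cite[Theorems 5.1 and 7.1]{krylov1999analytic} applied with $p=2$ and one higher index of regularity than in Theorem \ref{prop-weak-soltn}) delivers a solution $\rho^\xi$ in $L^2_{\mathcal{F}^W}(0,T;W^{3,2})\cap S^2_{\mathcal{F}^W}([0,T];W^{2,2})$ provided the drift-type forcing $-\chi\nabla\cdot((\nabla\mathcal{G}\ast\xi_t)\rho^\xi_t)$ is controlled in $L^2_{\mathcal{F}^W}(0,T;W^{1,2})$. By Theorem \ref{prop-weak-soltn}, $\rho^\xi$ already lies in $\mathbb{M}$, so it suffices to close the $W^{2,2}$-estimate.

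For the a priori $W^{2,2}$-bound I would differentiate \eqref{SPDE-linear-non-divergence-form}: for each multi-index $\alpha$ with $|\alpha|\leq 2$, $D^\alpha\rho_t^\xi$ satisfies an SPDE with the same parabolic leading term plus commutator terms involving derivatives of $\nu,\sigma$ up to order $3$ (this is precisely why $m=3$ is needed) together with source terms $D^\alpha[\chi\nabla\cdot(\rho_t^\xi\nabla\mathcal{G}\ast\xi_t)]$. Applying Ito's formula for the $L^2$-norm as in \cite[Theorem 2.1]{krylov2010ito} to $\|D^\alpha\rho_t^\xi\|_2^2$ and summing over $|\alpha|\leq 2$, the divergence-free condition Assumption \ref{assum1}(iii) again cancels the top-order contribution of the stochastic integrand (parallel to \eqref{eq-rmk-iii}), while the superparabolicity (i) yields a dissipative term $\lambda\|\rho_s^\xi\|_{W^{3,2}}^2$. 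The key nonlinearity to control is $\sum_{|\alpha|\leq 2}\|D^\alpha\nabla\cdot(\rho_s^\xi\,\nabla\mathcal{G}\ast\xi_s)\|_2$, which by the Leibniz rule, estimate \eqref{cestimate}, and the Sobolev embedding $W^{2,2}(\mathbb{R}^d)\hookrightarrow L^\infty(\mathbb{R}^d)$ available for $d=2,3$ (Lemma \ref{lem sobolev}(i)), is bounded by $C\|\rho_s^\xi\|_{W^{3,2}}\|\xi_s\|_{W^{2,2}}+C\|\rho_s^\xi\|_{W^{2,2}}\|\xi_s\|_{W^{3,2}}$. Absorbing the $\|\rho_s^\xi\|_{W^{3,2}}$-factor into the dissipation via Young's inequality (with the smallness of $\ell\kappa$ used to control the coefficient, exactly as in \eqref{est-F-11}) and applying the Burkholder--Davis--Gundy inequality to the remaining martingales, Gronwall's inequality yields
$$\EE\left[\sup_{t\in[0,T]}\|\rho_t^\xi\|_{W^{2,2}}^2\right]+\EE\int_0^T\|\rho_s^\xi\|_{W^{3,2}}^2\,ds\leq C\,\|\rho_0\|_{W^{2,2}}^2,$$
with $C$ independent of $\xi\in\mathbb{B}_1$. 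Choosing $M$ large enough relative to this right-hand side gives $\mathcal{T}(\mathbb{B}_1)\subseteq\mathbb{B}_1$.

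Since the $L^4$-contraction estimate \eqref{q-est-thm} established in Theorem \ref{prop-weak-soltn} is insensitive to additional $W^{2,2}$-constraints and the metric on $\mathbb{B}_1$ can be taken as $d(u,v)=\|u-v\|_{S^\infty([0,T];L^4)}$ (noting that $\mathbb{B}_1$ is complete with respect to this metric thanks to lower semicontinuity of the $S^2([0,T];W^{2,2})$-norm), shrinking $\kappa$ as in Theorem \ref{prop-weak-soltn} makes $\mathcal{T}$ a strict contraction on $\mathbb{B}_1$. The unique fixed point solves SPDE \eqref{SPDE} and lies in $\mathbb{M}_1$; by the uniqueness in $\mathbb{M}\supset\mathbb{M}_1$ from Theorem \ref{prop-weak-soltn}, this fixed point is \emph{the} solution, completing the proof.

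The step I expect to be most delicate is the product-rule bookkeeping in the $W^{2,2}$ energy estimate for the Keller--Segel nonlinearity: ensuring that every term of the form $D^\beta\rho^\xi\cdot D^{\alpha-\beta}\nabla\mathcal{G}\ast\xi$ (with $|\alpha|\leq 3$ after integrating by parts the divergence) is estimated so that at most one factor carries the top-order $W^{3,2}$-norm and that this norm can be absorbed by the superparabolic dissipation. This relies crucially on the dimension restriction $d\leq 3$ (for $W^{2,2}\hookrightarrow L^\infty$), on the Bessel gain $\nabla\mathcal{G}\colon W^{k,2}\to W^{k+1,2}$, and on $\kappa$ being small enough for the Young-type absorption, mirroring the role smallness played in the $L^4$-estimate in Theorem \ref{prop-weak-soltn}.
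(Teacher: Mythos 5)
Your overall strategy (re-running the fixed point in a smaller ball with an added $W^{2,2}$-constraint) is genuinely different from the paper's, but as written it has a real gap precisely at the step you flag as delicate: the claimed uniform-in-$\xi$ bound $\EE\bigl[\sup_{t\le T}\|\rho^\xi_t\|_{W^{2,2}}^2\bigr]+\EE\int_0^T\|\rho^\xi_s\|_{W^{3,2}}^2\,ds\le C\|\rho_0\|_{W^{2,2}}^2$ does not follow from the product estimate you display. First, the factor $\|\xi_s\|_{W^{3,2}}$ in your bound is not controlled at all by membership in $\mathbb B_1$, which only constrains $\|\xi\|_{S^2([0,T];W^{2,2})}$; the Bessel gain must be used to eliminate it, not merely mentioned. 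Second, and more seriously, your term $C\|\rho_s^\xi\|_{W^{3,2}}\|\xi_s\|_{W^{2,2}}$ places the non-small norm $\|\xi_s\|_{W^{2,2}}$ against the factor that is absorbed by the dissipation; after Young's inequality this produces $\frac{C}{\lambda}\|\xi_s\|_{W^{2,2}}^2\|\rho_s^\xi\|_{W^{2,2}}^2$, so the random, unbounded quantity $\|\xi_s\|_{W^{2,2}}^2$ sits in the Gronwall exponent. An $S^2$-bound on $\xi$ gives no exponential moment for $\int_0^T\|\xi_s\|_{W^{2,2}}^2\,ds$, so the resulting constant is not uniform over $\mathbb B_1$ and the self-map property $\mathcal T(\mathbb B_1)\subseteq\mathbb B_1$ is not established. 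The smallness of $\ell\kappa$ "as in \eqref{est-F-11}" does not rescue this term, because there the small factor came from $\|\nabla\mc G\ast\xi\|_\infty\le S_d\|\xi\|_4\le S_d\ell\kappa$, whereas here the $\xi$-factor is a $W^{2,2}$-norm. The estimate can be repaired by redistributing the norms so that every piece either carries the small factor $\|\xi\|_4$ (e.g. $\|D^2\rho\,\nabla\mc G\ast\xi\|_2\le S_d\|\xi\|_4\|\rho\|_{W^{2,2}}$, $\|D\rho\, D^2\mc G\ast\xi\|_2\le C\|\rho\|_{W^{1,4}}\|\xi\|_4$ via \eqref{cestimate}) or appears as a pure additive source paired with $\|\rho\|_4\le\ell\kappa$ (e.g. $\|\rho\,D^3\mc G\ast\xi\|_2\le\ell\kappa\,C\|\xi\|_{W^{1,4}}\le C\ell\kappa\|\xi\|_{W^{2,2}}$), after which Gronwall can be applied in expectation with a deterministic exponent; but this rebalancing is exactly the content of the missing step, and your written bound makes the wrong split. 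There is also a minor circularity left implicit: Ito's formula for $\|D^\alpha\rho^\xi\|_2^2$ with $|\alpha|\le 2$ presupposes $W^{3,2}$-regularity of $\rho^\xi$, which itself requires knowing the forcing lies in $L^2_{\mc F^W}(0,T;W^{1,2})$ (or an approximation argument).

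For comparison, the paper avoids all of this: since uniqueness already holds in $\mathbb M$ by Theorem \ref{prop-weak-soltn}, no new contraction is needed. One simply regards the already-obtained solution as solving the \emph{linear} SPDE \eqref{SPDE-linear-1} with source $f_t=-\nabla\cdot(\rho_t\nabla c_t)$, checks $f\in L^2_{\mc F^W}(0,T;L^2)$ from the $\mathbb M$-regularity, applies \cite[Theorem 5.1]{krylov1999analytic} to get $\rho\in L^2_{\mc F^W}(0,T;W^{2,2})\cap S^2_{\mc F^W}([0,T];W^{1,2})$, then checks $f\in L^2_{\mc F^W}(0,T;W^{1,2})$ with the improved regularity and applies the linear theory once more. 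This bootstrap needs no additional smallness, no BDG/Gronwall with random coefficients, and uses $m=3$ only through the hypotheses of the linear theory; if you want to keep your fixed-point route, you should either adopt this bootstrap for the regularity part or carry out the corrected energy estimate above in full.
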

\begin{proof}
Notice that $W^{2,2}(\RR^d)\hookrightarrow \mc{H}_4^{\frac{1}{2}} (\RR^d)\hookrightarrow L^4(\RR^d)$ for $d=2$ or $3$. Comparing Theorem \ref{thm-wellposedness} and Theorem \ref{prop-weak-soltn}, we only need to prove that the obtained unique   solution $\rho$ in Theorem \ref{prop-weak-soltn} is also lying in $L^2_{\mathcal F^W} (0,T;W^{3,2}(\RR^d)) \cap S_{\mc F^W}^2 ([0,T];W^{2,2}(\RR^d))$. In fact, $\rho\in \mathbb M$ (defined in \eqref{defM}) is the   solution of the following linear SPDE:
\begin{align}\label{SPDE-linear-1}
\begin{cases}
\ud \rho_t
=\left[\frac{1}{2}\sum_{i,j=1}^dD_{ij}(\rho_t\sum_{k=1}^{d'}(\nu^{ik}\nu^{jk}+ \sigma^{ik}\sigma^{jk}))+\chi f_t \right]\dt-\sum_{i=1}^dD_i( \rho_t\sum_{k=1}^{d'} \sigma^{ik})\ud W_t^k \\
\rho_0\text{ is given,}
\end{cases}
\end{align}
with 
$$f_t=-\nabla\cdot(\rho_t\nabla c_t)=-\nabla\rho_t\cdot\nabla c_t+\rho_t^2-\rho_tc_t.$$
As $\rho\in \mathbb M$, it follows that
\begin{align*}
\|f_t\|_{2}&=\|\nabla\cdot(\rho_t\nabla c_t)\|_{2} \leq \norm{\nabla c_t}_\infty\norm{\nabla \rho_t}_{2}+\|\rho_t \|_{4}^2+\norm{\rho_t}_2\norm{c_t}_\infty
\nonumber\\
&\leq S_d \|\rho\|_{4} \norm{\nabla\rho_t}_{2}+\|\rho_t \|_{4}^2+S_d\norm{\rho_t}_2\norm{\rho_t}_4\leq S_d\|\rho\|_{4} \norm{\rho_t}_{W^{1,2}}+\|\rho_t \|_{4}^2
\,,
\end{align*}
which indicates that
\begin{align}\label{est-f-1}
\|f\|^2_{L^2_{\mathcal F^W}(0,T;L^2)}
\leq 
2S_d^2 \|\rho\|^2_{S_{\mc F^W}^{\infty}([0,T];L^4)}  \| \rho\|^2_{L^2_{\mathcal F^W}(0,T;W^{1,2})}+ 2\| \rho\|^4_{L^4_{\mathcal F^W}(0,T;L^4)}<\infty\,.
\end{align}
The $L^p$-theory of SPDE (see  \cite[Theorem 5.1]{krylov1999analytic}) and Theorem \ref{prop-weak-soltn} imply that 
\begin{align}
\rho\in L^2_{\mathcal F^W} (0,T;W^{2,2}(\RR^d)) \cap S_{\mc F^W}^2 ([0,T];W^{1,2}(\RR^d)) \cap \mathbb M. \label{est-regular-1}
\end{align}

Similarly, for $j=1,\dots,d$, one has
\begin{align*}
&\|D_jf_t\|_{2}\leq C\norm{\rho_t}_{W^{2,2}}\norm{\rho_t}_4+C\norm{\rho_t}_{W^{1,4}}\norm{\rho_t}_4\,,
\end{align*}
which together with \eqref{est-regular-1} and \eqref{est-f-1} implies that
$$
\|f\|_{L^2_{\mathcal F^W}(0,T;W^{1,2})} <\infty.
$$
Hence, applying the $L^p$-theory of SPDE (see  \cite[Theorem 5.1]{krylov1999analytic}) and Theorem \ref{prop-weak-soltn} again, we conclude
$$\rho\in L^2_{\mathcal F^W} (0,T;W^{3,2}(\RR^d)) \cap S_{\mc F^W}^2 ([0,T];W^{2,2}(\RR^d)) \cap L^4_{\mathcal F^W} (0,T;W^{1,4}(\RR^d)) \cap S_{\mc F^W}^{\infty}([0,T];L^1\cap L^4(\RR^d)).$$
The proof is completed.
\end{proof}


\section{Well-posedness of the nonlinear SDE}
Let us consider the following SDE:
\begin{align}\label{SDE1}
\begin{cases}
\ud Y_t=\chi \nabla \mc{G}\ast\rho_t(Y_t)\dt+\nu_t(Y_t) \ud B_t+\sigma_t(Y_t)\ud W_t,\quad t>0\,,\\
\rho_t\mbox{ is the conditional density of  }Y_t \mbox{ given } \mc{F}_t^W\,,\\
Y_0=\zeta^1,
\end{cases}
\end{align}
where we take $B_t=B_t^1$ \textit{in this section} as a $d'$-dimensional Wiener process independent of $W_t$ and $\zeta^1$. In the following, we prove the well-posedness of the nonlinear SDE \eqref{SDE1} which actually shares the same solvability as SDE \eqref{SDE} for each $i\in \mathbb Z^+$.
\begin{thm}\label{thmSDE}(Well-posedness of the SDE)
Under the same assumptions as in Theorem \ref{thm-wellposedness}, let $\rho  $ be the regular   solution to the SPDE \eqref{SPDE} obtained in Theorem \ref{thm-wellposedness}. 
Then the nonlinear SDE \eqref{SDE1} has a unique strong solution $(Y_t)_{t\geq 0}$  with $\rho\in  S^{2}_{\mc F^W}([0,T];W^{2,2}(\RR^d)) \cap S^{\infty}_{\mc F^W}([0,T];L^{4}(\RR^d))$ being its conditional density under filtration $(\mc F^W_t)_{t\in[0,T]}$.
\end{thm}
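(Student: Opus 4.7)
The plan is to split the argument into two parts: (i) a pathwise construction of the unique strong solution $Y$ to SDE \eqref{SDE1} when the random field $\rho$ from Theorem~\ref{thm-wellposedness} is \emph{frozen} as an input, and (ii) a forward--backward SPDE duality that identifies the conditional law of $Y_t$ given $\mc F^W_t$ with $\rho_t\,\dx$.

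\textbf{Step 1: Unique strong solvability of the SDE for fixed $\rho$.} From Theorem~\ref{thm-wellposedness} we have $\rho\in S^2_{\mc F^W}([0,T];W^{2,2})\cap S^\infty_{\mc F^W}([0,T];L^4)$, so that $c_t=\mc G\ast\rho_t\in W^{4,2}(\RR^d)$ with the right integrability in $(t,\omega)$. By Lemma~\ref{lem sobolev}(i), for $d=2,3$ this embeds into $C^{2,\delta}(\RR^d)$, so the drift $b_t(x):=\chi\nabla\mc G\ast\rho_t(x)$ is bounded and globally Lipschitz in $x$ with Lipschitz constant an $\mc F^W$-adapted process in $L^2(0,T)$ almost surely. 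Combined with the $C^3$-regularity of $\nu,\sigma$ from Assumption~\ref{assum1} with $m=3$, standard results for SDEs with random, spatially Lipschitz coefficients (conditioning on $\mc F^W$ and invoking a Yamada--Watanabe argument on each sample path) produce a pathwise unique strong solution $Y\in S^q_{\mc F}([0,T];\RR^d)$ for every $q\geq 2$.

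\textbf{Step 2: The measure-valued equation for the conditional law.} Write $\mu_t$ for the $\mc F^W_t$-conditional law of $Y_t$. Applying It\^o's formula to $\varphi(Y_t)$ for $\varphi\in C_c^\infty(\RR^d)$ and then taking $\EE[\cdot\mid \mc F^W_t]$, the $\ud B$-integral vanishes conditionally (since $B$ is independent of $\mc F^W$), while the $\ud W$-integral survives because $W$ is $\mc F^W$-measurable. One arrives at
\begin{align*}
\vint{\mu_t,\varphi}=\,&\vint{\mu_0,\varphi}+\chi\int_0^t\vint{\mu_s,\nabla\varphi\cdot\nabla c_s}\ds+\sum_{i,k}\int_0^t\vint{\mu_s,D_i\varphi\,\sigma^{ik}_s}\ud W_s^k\\
&+\tfrac{1}{2}\sum_{i,j,k}\int_0^t\vint{\mu_s,(\nu^{ik}_s\nu^{jk}_s+\sigma^{ik}_s\sigma^{jk}_s)D_{ij}\varphi}\ds,
\end{align*}
which, because $c_s$ is now a \emph{given} coefficient computed from the SPDE solution, is nothing but the weak form of the \emph{linear} measure-valued SPDE that is also solved by $\rho_t\,\dx$ with the same initial datum $\rho_0\,\dx$.

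\textbf{Step 3: Duality identification $\mu_t=\rho_t\,\dx$.} For $\varphi\in C_c^\infty(\RR^d)$, introduce the adjoint linear backward SPDE with terminal value $u_T=\varphi$,
\begin{equation*}
-\ud u_t=\Bigl[\tfrac{1}{2}\sum_{i,j,k}(\nu^{ik}_t\nu^{jk}_t+\sigma^{ik}_t\sigma^{jk}_t)D_{ij}u_t+\chi\nabla c_t\cdot\nabla u_t\Bigr]\dt-\sum_{i,k}\sigma^{ik}_t D_i u_t\,\ud W_t^k,
\end{equation*}
whose unique classical solution $u\in S^2_{\mc F^W}([0,T];W^{2,2})\cap L^2_{\mc F^W}(0,T;W^{3,2})$ can be produced by the $L^p$-theory of linear BSPDEs, using Assumption~\ref{assum1} with $m=3$ and the $C^{2,\delta}$-regularity of $c_t$. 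Two It\^o--Wentzell computations then close the argument. First, applying It\^o--Wentzell to $u_t(Y_t)$ along the SDE and taking $\EE[\cdot\mid\mc F^W_T]$ yields, thanks to the divergence-free condition Assumption~\ref{assum1}(iii), $\EE[\varphi(Y_T)\mid\mc F^W_T]=\int u_0(x)\rho_0(x)\dx$. Second, the forward--backward integration-by-parts between SPDE \eqref{SPDE} for $\rho$ and the BSPDE for $u$ gives $\vint{\rho_T,\varphi}=\int u_0(x)\rho_0(x)\dx$. Equating the two identities yields $\EE[\varphi(Y_T)\mid\mc F^W_T]=\vint{\rho_T,\varphi}$ for every $\varphi\in C_c^\infty$, and a density argument concludes that $\mu_T=\rho_T\,\dx$. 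Running the same argument with $T$ replaced by any $t\in[0,T]$ completes the identification.

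\textbf{Main obstacle.} The crux is the duality of Step 3: one must justify the It\^o--Wentzell formula both for $u_t(Y_t)$ and for the pairing $\vint{\rho_t,u_t}$, carefully tracking the cross-variation terms produced by the \emph{common} noise $W$ and exploiting Assumption~\ref{assum1}(iii) to obtain the precise adjoint relation between the forward SPDE and the BSPDE (the same cancellation that already drove the proofs of Theorems~\ref{prop-weak-soltn} and \ref{thm-wellposedness}). The regularity furnished by Theorem~\ref{thm-wellposedness} is tuned to make these manipulations legitimate; solvability of the BSPDE with the $C^{2,\delta}$-drift $\chi\nabla c_t$ and the handling of $\mc F^W$-measurability are the remaining technical checks.
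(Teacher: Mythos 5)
Your Step 1 matches the paper's: the regularity of $\rho$ from Theorem \ref{thm-wellposedness} makes the drift $\chi\nabla\mc G\ast\rho_t$ bounded and Lipschitz, so the linear SDE with $\rho$ frozen has a unique strong solution. The problem is Step 3 (Step 2 is not actually used and would itself need justification). First, your backward equation is misformulated: you prescribe the $\ud W$-coefficient to be $-\sum_{i,k}\sigma^{ik}_tD_iu_t$, whereas in a backward SPDE adapted to $\mc F^W$ the martingale integrand must be an unknown $\psi$, solved for together with $u$, and the correct adjoint equation carries the extra drift term $\sum_{i,k}\sigma^{ik}_tD_i\psi^k_t$ (this is exactly the paper's Lemma \ref{lem-BSPDE}). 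A backward equation with a \emph{prescribed} gradient-noise term and deterministic terminal data generally admits no adapted solution (e.g.\ with constant coefficients and no drift the solution of the true BSPDE is deterministic with $\psi=0\neq-\sigma\cdot\nabla u$), so the object you dualize against does not exist as written.

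Second, and more seriously, the two identities you claim almost surely cannot hold almost surely: both $\EE[\varphi(Y_T)\,|\,\mc F^W_T]$ and $\langle\rho_T,\varphi\rangle$ are genuinely random (this is the whole point of the common noise), while $\int u_0\rho_0\,\dx$ is deterministic ($u_0$ is $\mc F^W_0$-measurable). In the It\^o--Wentzell expansion of $u_t(Y_t)$ the $\ud W$-integral does \emph{not} vanish under $\EE[\cdot\,|\,\mc F^W_T]$ (its conditional expectation is again a stochastic integral of projected integrands), and likewise the forward--backward pairing $\langle u_t,\rho_t\rangle$ leaves $\ud W$-integral remainders; both identities are true only after taking full expectations. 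Equating expectations only gives $\EE[\varphi(Y_T)]=\EE\langle\rho_T,\varphi\rangle$, i.e.\ it identifies the unconditional law, not the conditional density. The paper's way around this is precisely the device you are missing: take \emph{random} terminal data $G=\xi\phi$ with an arbitrary bounded $\mc F_{T_1}$-measurable $\xi$, use the representation $u_t(y)=\EE[G(\overline Y_{T_1})\,|\,\overline Y_t=y,\mc F^W_t]$ together with the duality $\langle u_0,\rho_0\rangle=\EE[\xi\langle\phi,\rho_{T_1}\rangle]$, and then exploit the arbitrariness of $(T_1,\xi,\phi)$ to upgrade the equality of expectations to the a.s.\ identity $\EE[\phi(\overline Y_{T_1})\,|\,\mc F^W_{T_1}]=\langle\phi,\rho_{T_1}\rangle$. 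Without allowing random terminal data (and the corresponding $(u,\psi)$-formulation of the BSPDE), your duality cannot see the conditional law, so the key identification step fails.
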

\begin{proof}
For the solution $\rho\in S^{2}_{\mc F^W}(([0,T];W^{2,2}(\RR^d)) \cap S^{\infty}_{\mc F^W}([0,T];L^4(\RR^d))$ of the SPDE \eqref{SPDE} given in Theorem  \ref{thm-wellposedness},  by embedding theorems , we have
\begin{align}
\nabla\mc{G}\ast\rho \in S_{\mc F^W}^2([0,T];W^{3,2}(\RR^d)) \cap S^{\infty}_{\mc F^W}([0,T];W^{1,4}(\RR^d)) \,\, \hookrightarrow\,\,S_{\mc F^W}^2([0,T];W^{1,\infty}(\RR^d)) \cap   S^{\infty}_{\mc F^W}([0,T];L^{\infty}(\RR^d))\,,
\label{relatn-G}
\end{align}
which ensures the existence and uniqueness of strong solution $(\overline Y_t)_{t\geq 0}$ to the following linear SDE:
\begin{align}\label{SDElinear}
\begin{cases}
\ud \overline Y_t=\chi \nabla\mc{G} \ast\rho_t(\overline Y_t)\dt+\nu_t(\overline Y_t) \ud B_t+\sigma_t(\overline Y_t)\ud W_t,\quad t>0\,,\\
\overline Y_0 =\zeta^1.
\end{cases}
\end{align}
{To prove that the  conditional density given  $\mc{F}_t^W$  of $(\overline Y_t)_{t\geq 0}$ exists and is the solution to SPDE \eqref{SPDE}, we need the following result on backward SPDE and associated probabilistic representation.
\begin{lem}\label{lem-BSPDE}
Let Assumption \ref{assum1} hold with $m=3$, $\rho\in  S^{2}_{\mc F^W}([0,T];W^{2,2}(\RR^d)) \cap S^{\infty}_{\mc F^W}([0,T];L^{4}(\RR^d)) $ and $T_1\in(0,T]$. Then for each $G\in L^2(\Omega, \mc F_{T_1}; W^{2,2}(\RR^d))$, the following backward SPDE:
\begin{equation}\label{BSPDE}
\left\{
\begin{split}
-\ud u_t
&=\left[ \frac{1}{2}\sum_{i,j=1}^d\sum_{k=1}^{d'}(\nu^{ik}_t\nu^{j,k}_t+ \sigma^{ik}_t\sigma^{jk}_t)D_{ij}u_t +\chi\sum_{i=1}^d D_i\mc{G} \ast\rho_t D_iu_t  + \sum_{i=1}^d\sum_{k=1}^{d'} \sigma_t^{ik}D_i \psi_t^k  \right]\,dt   \\
&\quad-\sum_{k=1}^{d'} \psi_t^k \ud W_t^k, \\
u_{T_1}&=G ,
\end{split}
\right.
\end{equation}
admits a unique solution 
$$(u,\psi)  \in \left( L^2_{\mathcal F^W} (0,T;W^{3,2}(\RR^d)) \cap S_{\mc F^W}^2 ([0,T];W^{2,2}(\RR^d)) \right) \times L^2_{\mathcal F^W} (0,T;W^{2,2}(\RR^d)) ,  $$
i.e., for any $\varphi\in C_c^2(\RR^d)$, there holds for each $t\in[0,T_1]$,
\begin{align*}
\langle u_t, \,\varphi\rangle
&= \langle \varphi,\,G\rangle
+\int_t^{T_1} \bigg\langle \varphi,\,  \frac{1}{2}\sum_{i,j=1}^d\sum_{k=1}^{d'}(\nu_s^{ik}\nu_s^{jk}+ \sigma_s^{ik}\sigma_s^{jk})D_{ij}u_s 
		+\chi\sum_{i=1}^d D_i\mc{G} \ast\rho_s D_iu_s  + \sum_{i=1}^d\sum_{k=1}^{d'} \sigma_s^{ik}D_i \psi_s^k \bigg\rangle\,ds
		\\
&\quad\quad	
	-\int_t^{T_1} \sum_{k=1}^{d'} \left\langle \varphi,\,  \psi^k_s\right\rangle \,dW_s^k ,\quad \text{a.s.}
\end{align*}
Moreover, for this solution, we have 
\begin{align}
u_t(y)=\EE\left[  G(\overline Y_{T_1}) \big|\overline Y_t=y,\, \mc F_t^W\right],\quad \text{a.s. for any }t\in[0,T_1].  \label{eq-prob-rep}
\end{align}
\end{lem}

For each $T_1\in (0,T]$, take an arbitrary $\xi\in L^{\infty}(\Omega,\mc F_{T_1})$ and $\phi\in C_c^{\infty}(\RR^d)$. In view of the SPDE \eqref{SPDE}, applying the It\^o formula to $\langle u_t,\,\rho_t\rangle $ (the duality analysis on the \eqref{SPDE} and \eqref{BSPDE} as in \cite{DuQiuTang10,Zhou_92})) gives
\begin{align*}
\langle u_0, \rho_0\rangle = \langle \xi\phi, \rho_{T_1} \rangle
-\int_0^{T_1} \sum_{i=1}^d\sum_{k=1}^{d'} \langle u_t,\,D_i(\sigma^{ik}_t\rho_t)\rangle\, dW^k_t -\int_0^{T_1} \sum_{k=1}^{d'} \langle \rho_t,\, \psi^k_t\rangle\, dW^k_t, \quad \text{a.s.,}
\end{align*}
where $(u,\psi)$ is the solution in Lemma \ref{lem-BSPDE} with $G=\xi\phi$. Then we have by taking expectations on both sides,
\begin{align*}
\langle u_0, \rho_0\rangle = \EE [\langle \xi\phi, \rho_{T_1} \rangle] = \EE [\xi \langle \phi, \rho_{T_1} \rangle]  .
\end{align*}
On the other hand, in view of the probabilistic representation \eqref{eq-prob-rep}, we have
\begin{align*}
\langle u_0, \rho_0\rangle =\int_{\RR^d}\EE\left[  G(\overline Y_{T_1}) \big|\overline Y_0=y,\, \mc F_0^W\right]\rho_0(y)\dy=\EE [\xi\phi(\overline Y_{T_1}) ]= \EE\left[  \xi \EE[\phi(\overline Y_{T_1}) \big| \mc F^W_{T_1}]   \right].
\end{align*}
Therefore, 
$$
\EE [\xi \langle \phi, \rho_{T_1} \rangle] 
= \EE\left[  \xi \EE[\phi(\overline Y_{T_1}) \big| \mc F^W_{T_1}]   \right],
$$
which by the arbitrariness of $(T_1,\xi,\phi)$ implies that $\rho_t$ is the conditional density of $\overline Y_t $ given  $\mc{F}_t^W$  for each $t\in[0,T]$, and shows the existence of strong solution to SDE \eqref{SDE1}.  In fact, this also means that each strong solution of SDE \eqref{SDE1} with $\rho\in S^{2}_{\mc F^W}([0,T];W^{2,2}(\RR^d)) \cap S^{\infty}_{\mc F^W}([0,T];L^{4}(\RR^d))$ must have the conditional density $\rho$ being the solution to SPDE \eqref{SPDE}, and thus, the strong solution  is unique. We complete the proof.
}
\end{proof}

\begin{proof}[Proof of Lemma \ref{lem-BSPDE}]
Embedding theorem gives \eqref{relatn-G} which by the $L^2$-theory of backward SPDE (see \cite{DuQiuTang10,Zhou_92}) implies that backward SPDE \eqref{BSPDE} has a unique solution $(u,\psi)\in   \left( L^2_{\mathcal F^W} (0,T;W^{1,2}(\RR^d)) \cap S_{\mc F^W}^2 ([0,T];L^2(\RR^d)) \right) \times L^2_{\mathcal F^W} (0,T;L^2(\RR^d))$.\footnote{The fact 
$u\in S_{\mc F^W}^2 ([0,T];L^2(\RR^d))$ is not claimed in \cite{DuQiuTang10,Zhou_92} but it follows straightforwardly from \cite[Theorem A.2]{RRW_2007} for It\^o's formula of square norms. It is similar in the relation \eqref{eq-w2-thry}.}
Then we need to show that the solution $(u,\psi)$ has higher regularity as it is done in the proof of Theorem \ref{thm-wellposedness}. In fact, we have for each $i=1,\dots,d$,
$$
\left\|D_i\mc{G} \ast\rho_s D_iu_s \right\|_2
\leq \left\|D_i\mc{G} \ast\rho_s  \right\|_{\infty}   \left\|D_iu_s \right\|_2
\leq S_d\|\rho_s\|_4  \left\|D_iu_s \right\|_2,
$$
and thus, $ D_i\mc{G} \ast\rho D_iu \in L^2_{\mc F^W} (0,T_1;L^2)$, which by  $L^2$-theory of backward SPDE indicated further  
\begin{align}
(u,\psi)\in   \left( L^2_{\mathcal F^W} (0,T;W^{2,2}(\RR^d)) \cap S_{\mc F^W}^2 ([0,T];W^{1,2}(\RR^d)) \right) \times L^2_{\mathcal F^W} (0,T;W^{1,2}(\RR^d)).
\label{eq-w2-thry}
\end{align}
Taking derivatives gives further
\begin{align*}
\left\|D_j(D_i\mc{G} \ast\rho_s D_iu_s) \right\|_2
&\leq \left\|D_{ij}\mc{G} \ast\rho_s D_iu_s    \right\|_2 + \left\|D_{i}\mc{G} \ast\rho_s D_{ij}u_s    \right\|_2
\\
& \leq
	 \left\|D_{ij}\mc{G} \ast\rho_s\right\|_4\left\|D_iu_s    \right\|_4 	+   \left\|D_{i}\mc{G} \ast\rho_s\right\|_{\infty}  \left\| D_{ij}u_s    \right\|_2
	 \\
&\leq
	 S_d\left\|\rho_s\right\|_4  \left\|D_iu_s    \right\|_2^{1/4}   \left\|D_iu_s    \right\|_6^{3/4} 
	 		+  S_d\left\|  \rho_s\right\|_{4}  \left\| D_{ij}u_s    \right\|_2
	\\
&\leq
	 \left\|\rho_s\right\|_4  \left\| u_s    \right\|_{W^{2,2}}  
	 		+  S_d\left\|  \rho_s\right\|_{4}  \left\| u_s    \right\|_{W^{2,2}},
\end{align*}
and thus, $D_i\mc{G} \ast\rho_s D_iu_s \in L^2_{\mc F^W} (0,T_1;W^{1,2}(\RR^d))$, $i=1,\dots,d$. Applying the $L^2$-theory again, we arrive at
$$(u,\psi)  \in \left( L^2_{\mathcal F^W} (0,T;W^{3,2}(\RR^d)) \cap S_{\mc F^W}^2 ([0,T];W^{2,2}(\RR^d)) \right) \times L^2_{\mathcal F^W} (0,T;W^{2,2}(\RR^d)) .  $$

W.l.o.g., we prove the probabilistic representation \eqref{eq-prob-rep} for the case when $t=0$. In fact, a straightforward application of \cite[Theorem 3.1]{Tang-Yang-2011} yields that
\begin{align*}
u_0(y)=G(\overline Y_{T_1}) -\int_0^{T_1} \left( \sum_k^{d'} \psi^k_s(\overline Y_s) + \sum_{i=1}^d\sigma^{ik}_s(\overline Y_s) D_iu_s(\overline Y_s)\right) \,dW^k_s, \quad \text{a.s.}
\end{align*}
Noticing that by embedding theorem it holds that $L^2_{\mathcal F^W} (0,T;W^{2,2}(\RR^d))   \,\, \hookrightarrow\,\, L^2_{\mathcal F^W} (0,T;C^{1/4}(\RR^d)) $, we may easily check that the stochastic integral in the above equality is mean-zero. Therefore, we have $u_0(y)=\EE\left[  G(\overline Y_{T_1}) \big|\overline Y_0=y,\, \mc F_0^W\right]$ by taking conditional expectation on both sides. For general $t\in(0,T_1]$, the proof of \eqref{eq-prob-rep} follows similarly.

\end{proof}


\section{ Mean-field limit of the particle system \eqref{particle} towards the stochastic KS equation \eqref{SPDE}}

To prove the mean-field limit, we recall the following auxiliary stochastic dynamics $\{(Y_t^i)_{t\geq 0}\}_{i=1}^N$ as defined in \eqref{SDE} 
\begin{align}\label{SDEs}
\begin{cases}
\ud Y_t^i= \chi\nabla\mc{G}\ast\rho_t(Y_t^i)\dt+\nu_t(Y_t^i) \ud B_t^i+\sigma_t(Y_t^i)\ud W_t,\quad t>0,\quad i=1,\cdots,N\,,\\
\rho_t\mbox{ is the conditional density of  }Y_t^i \mbox{ given } \mc{F}_t^W\mbox{ for all } i=1,\cdots,N\,.
\\
Y_0^i=\zeta^i.
\end{cases}
\end{align}
This means that $\{(Y_t^i)_{t\geq 0}\}_{i=1}^N$ are $N$ copies of solutions to the nonlinear SDE \eqref{SDE1}, and they are conditional i.i.d. given $W_t$.  
 We will also use the regularized version
\begin{align}\label{RSDEs}
	\begin{cases}
		\ud Y_t^{i,\varepsilon}= \chi\nabla\mc{G}_\varepsilon\ast\rho_t^\varepsilon(Y_t^{i,\varepsilon})\dt+\nu_t(Y_t^{i,\varepsilon}) \ud B_t^i+\sigma_t(Y_t^{i,\varepsilon})\ud W_t,\quad t>0,\quad i=1,\cdots,N\,,\\
		\rho_t^\varepsilon\mbox{ is the conditional density of  }Y_t^{i,\varepsilon} \mbox{ given } \mc{F}_t^W\mbox{ for all } i=1,\cdots,N\,.
		\\
		Y_0^{i,\varepsilon}=Y_0^i=\zeta^i,
	\end{cases}
\end{align}
with $\rho_t^\varepsilon$ satisfying the following regularized stochastic KS equation
\begin{align}\label{RSPDE}
\begin{cases}
\ud \rho_t^\varepsilon
=&\frac{1}{2}\sum_{i,j=1}^dD_{ij}\left(\rho_t^\varepsilon\sum_{k=1}^{d'}(\nu^{ik}_t\nu^{jk}_t+ \sigma^{ik}_t\sigma^{jk}_t)\right)\dt-\chi\nabla\cdot(\nabla (\mathcal{G}_\varepsilon*\rho_t^\varepsilon)\rho_t^\varepsilon)\dt\\
&-\sum_{i=1}^{d}D_i\left( \rho_t^\varepsilon \sum_{k=1}^{d'}\sigma^{ik}_t\ud W_t^k\right)\,, \\
\rho_0^\varepsilon=\rho_0\,,
\end{cases}
\end{align}
Indeed, following the same arguments as in Sections 3-4,  we obtain the well-posedness of the regularized system \eqref{RSDEs} and equation \eqref{RSPDE}. Next we estimate the difference of the solutions. Set $e_t^\varepsilon=\rho_t^\varepsilon-\rho_t$ for $t\in[0,T]$  with $e_0^\varepsilon=0$. Following the same computation as in \eqref{eq:ito-delta}, one has
\begin{align}\label{54}
\|e_t^\varepsilon\|_{4}^4 
&\leq
-6\lambda  \int_0^{t} \|e_s^\varepsilon\nabla e_s^\varepsilon\|_2^2\,ds 
+2 \lambda\int_0^t\|e_s^\varepsilon\nabla e_s^\varepsilon\|_{2}^2\,ds+\frac{(12\Lambda^2 )^2}{2\lambda}\int_0^t\| e_s^\varepsilon \|_{4}^4\,ds
\nonumber\\
&\quad
+\int_0^t12\chi\left\langle |e_s^\varepsilon|^2  \nabla e_s^\varepsilon, (\nabla\mc{G}_\varepsilon\ast\rho_s^\varepsilon)\rho_s^\varepsilon- (\nabla\mc{G}\ast \rho_s) \rho_s  \right\rangle\,ds,\quad\text{a.s.}\,.
\end{align}
Notice that 
\begin{align*}
&12\chi\left\langle |e_s^\varepsilon|^2  \nabla e_s^\varepsilon, (\nabla\mc{G}_\varepsilon\ast\rho_s^\varepsilon)\rho_s^\varepsilon- (\nabla\mc{G}\ast \rho_s) \rho_s  \right\rangle\\
=&12\chi\left\langle |e_s^\varepsilon|^2  \nabla e_s^\varepsilon, \nabla\mc{G}\ast e_s^\varepsilon \rho^{\varepsilon}_s+ \nabla\mc{G}\ast \rho_s e_s^\varepsilon  \right\rangle
+12\chi\left\langle |e_s^\varepsilon|^2  \nabla e_s^\varepsilon, (\nabla\mc{G}_\varepsilon-\nabla\mc{G})\ast \rho_s^\varepsilon\rho_s^\varepsilon \right\rangle\,.
\end{align*}
Similar to the computation in \eqref{314}, one obtains
\begin{align*}
12\chi\left\langle |e_s^\varepsilon|^2  \nabla e_s^\varepsilon, \nabla\mc{G}\ast e_s^\varepsilon \rho^{\varepsilon}_s+ \nabla\mc{G}\ast \rho_s e_s^\varepsilon  \right\rangle
\leq 
2\lambda  \|e_s^\varepsilon\nabla e_s^\varepsilon\|_{2}^2 +C(T,\chi,\lambda,\Lambda,d)\|e_s^\varepsilon\|_{4}^4\,.
\end{align*}
On the other hand, we compute
\begin{align*}
12\chi\left\langle |e_s^\varepsilon|^2  \nabla e_s^\varepsilon, (\nabla\mc{G}_\varepsilon-\nabla\mc{G})\ast \rho_s^\varepsilon\rho_s^\varepsilon \right\rangle
\leq12\chi\|e_s^\varepsilon\nabla e_s^\varepsilon\|_{2}\norm{(\nabla\mc{G}_\varepsilon-\nabla\mc{G})\ast \rho_s^\varepsilon}_\infty\|e_s^\varepsilon\|_4\|\rho_s^\varepsilon\|_4\,.
\end{align*}
Notice that
\begin{align}
|(\nabla\mc{G}_\varepsilon-\nabla\mc{G})\ast \rho_s^\varepsilon |(x)&= |\psi_\varepsilon\ast (\nabla\mc{G}\ast \rho_s^\varepsilon)-\nabla\mc{G}\ast \rho_s^\varepsilon|(x)=\left|\int_{\R^d}\psi_\varepsilon(y)[\nabla\mc{G}\ast \rho_s^\varepsilon(x-y)-\nabla\mc{G}\ast \rho_s^\varepsilon(x)]\dy\right|\notag\\
&\leq \norm{\nabla\mc{G}\ast \rho_s^\varepsilon}_{W^{1,\infty}}\int_{\RR^d}|y|\psi_\varepsilon(y)\dy\leq C\varepsilon \norm{\nabla\mc{G}\ast \rho_s^\varepsilon}_{W^{1,\infty}}\leq C\varepsilon \norm{\rho_s^\varepsilon}_{W^{1,4}}\,,
\end{align}
where $C$ depends only on $T,\chi,\lambda,\Lambda,$ and $ d$.
Then one has
\begin{equation}
12\chi\left\langle |e_s^\varepsilon|^2  \nabla e_s^\varepsilon, (\nabla\mc{G}_\varepsilon-\nabla\mc{G})\ast \rho_s^\varepsilon\rho_s^\varepsilon \right\rangle
\leq
2 \lambda \|e_s^\varepsilon\nabla e_s^\varepsilon\|_{2}^2+C\varepsilon^2\norm{\rho_s^\varepsilon}_{W^{1,4}}^2\|e_s^\varepsilon\|_4^2\|\rho_s^\varepsilon\|_4^2,
\end{equation}
and thus
\begin{equation}
12\chi\left\langle |e_s^\varepsilon|^2  \nabla e_s^\varepsilon, (\nabla\mc{G}_\varepsilon\ast\rho_s^\varepsilon)\rho_s^\varepsilon- (\nabla\mc{G}\ast \rho_s) \rho_s  \right\rangle
\leq 
4\lambda \|e_s^\varepsilon\nabla e_s^\varepsilon\|_{2}^2+C\varepsilon^2\norm{\rho_s^\varepsilon}_{W^{1,4}}^2\|e_s^\varepsilon\|_4^2\|\rho_s^\varepsilon\|_4^2+C\norm{e_s^\varepsilon}_4^4\,.
\end{equation}
It follows from \eqref{54} that
\begin{align*}
\|e_t^\varepsilon\|_{4}^4 \leq  C_1\int_0^t\|e_s^\varepsilon\|_4^4\ds+C_2\varepsilon^2\int_0^t\norm{\rho_s^\varepsilon}_{W^{1,4}}^2\|e_s^\varepsilon\|_4^2\|\rho_s^\varepsilon\|_4^2\ds\,,
\end{align*} 
where $C_1,C_2$ depend only on $T,\chi,\lambda,\Lambda$, and $ d$.
 By Gronwall's inequality, we have
 \begin{equation}
 \sup_{t\in[0,T]}\|e_t^\varepsilon\|_{4}^4\leq \left[C_2\varepsilon^2 \sup_{t\in[0,T]}\{\|e_t^\varepsilon\|_4^2\|\rho_t^\varepsilon\|_4^2\}\int_0^T\norm{\rho_s^\varepsilon}_{W^{1,4}}^2\ds\right] \exp(C_1T)\quad \text{a.s.}\,.
 \end{equation}
 This leads to 
\begin{align}
\|\rho^\varepsilon-\rho\|_{S_{\mc F^W}^{2}([0,T];L^4(\RR^d))}
&\leq  C\left(C_1,C_2,T,\|\rho\|_{S_{\mc F^W}^{\infty}([0,T];L^4(\RR^d))},\|\rho^\varepsilon\|_{S_{\mc F^W}^{\infty}([0,T];L^4(\RR^d))},\|\rho^\varepsilon\|_{L_{\mc F^W}^{2}([0,T];W^{1,4}(\RR^d))}\right)\varepsilon
\nonumber \\
&\leq 
C\left( T, \chi,\lambda,\Lambda,d\right) \cdot \varepsilon
  \label{error}
\end{align}
where we have used the fact that the quantities $\|\rho\|_{S_{\mc F^W}^{\infty}([0,T];L^4(\RR^d))},\|\rho^\varepsilon\|_{S_{\mc F^W}^{\infty}([0,T];L^4(\RR^d))}$, and $\|\rho^\varepsilon\|_{L_{\mc F^W}^{2}([0,T];W^{1,4}(\RR^d))}$ depend only on $T,\chi,\lambda,\Lambda$, and $d$, independent of $\varepsilon$.

Our main theorem of mean-field limit states that the mean-field dynamics $\{(Y_t^{i,\varepsilon})_{t\geq 0}\}_{i=1}^N$ well approximate the regularized interacting particle system $\{(X_t^{i,\varepsilon})_{t\geq 0}\}_{i=1}^N$ in \eqref{particle}.
\begin{thm}\label{thmmean}
Under the same assumptions as in Theorem \ref{thm-wellposedness}, let  $\{(X_t^{i,\varepsilon})_{t\geq 0}\}_{i=1}^N$ and $\{(Y_t^{i,\varepsilon})_{t\geq 0}\}_{i=1}^N$  satisfy the interacting particle system \eqref{particle} and the mean-field dynamics \eqref{RSDEs} respectively. 
Then for any fixed $0<\delta\ll1$, such that $\varepsilon^{-d}\leq \delta \ln (N)$ and $C\delta<1$ it holds that 
\begin{align}\label{eqmean}
 \sup\limits_{t\in[0,T]}\sup\limits_{i=1,\cdots,N}\EE\left[\left|X_t^{i,\varepsilon}-Y_t^{i,\varepsilon}\right|^2\right] \leq C\frac{(\delta\ln(N))^{\frac{2d-2}{d}} }{N^{1-C\delta}}\,,
\end{align}
where $C$ is a constant depending only on $\chi,T,d,d'$ and $\Lambda$.
\end{thm}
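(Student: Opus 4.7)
The natural approach is to apply It\^o's formula to $|X_t^{i,\varepsilon}-Y_t^{i,\varepsilon}|^2$, take expectation, and close the resulting integral inequality by a Gronwall argument. Subtracting \eqref{RSDEs} from \eqref{particle}, the drift difference splits into (i) contributions from the diffusion coefficients $\nu_t(X_t^{i,\varepsilon})-\nu_t(Y_t^{i,\varepsilon})$ and $\sigma_t(X_t^{i,\varepsilon})-\sigma_t(Y_t^{i,\varepsilon})$, which are handled by the Lipschitz bound in Assumption \ref{assum1}(ii) (the $W_t$ stochastic integral becomes a squared Lipschitz term after It\^o, while the $B_t^i$ martingale vanishes in expectation), and (ii) the interaction term
\[
\Delta_t^i:=\frac{\chi}{N-1}\sum_{j\neq i}\nabla\mc{G}_\varepsilon(X_t^{i,\varepsilon}-X_t^{j,\varepsilon})-\chi\,\nabla\mc{G}_\varepsilon\ast\rho_t^\varepsilon(Y_t^{i,\varepsilon}),
\]
which is the core of the argument.

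The next step is to decompose $\Delta_t^i=\Delta_t^{i,(1)}+\Delta_t^{i,(2)}$ with
\[
\Delta_t^{i,(1)}=\frac{\chi}{N-1}\sum_{j\neq i}\Bigl[\nabla\mc{G}_\varepsilon(X_t^{i,\varepsilon}-X_t^{j,\varepsilon})-\nabla\mc{G}_\varepsilon(Y_t^{i,\varepsilon}-Y_t^{j,\varepsilon})\Bigr]
\]
and
\[
\Delta_t^{i,(2)}=\frac{\chi}{N-1}\sum_{j\neq i}\nabla\mc{G}_\varepsilon(Y_t^{i,\varepsilon}-Y_t^{j,\varepsilon})-\chi\,\nabla\mc{G}_\varepsilon\ast\rho_t^\varepsilon(Y_t^{i,\varepsilon}).
\]
For $\Delta_t^{i,(1)}$, the gradient estimate \eqref{espotential} with $|\alpha|=1$ gives $\|\nabla^2\mc{G}_\varepsilon\|_\infty\leq C\varepsilon^{-d}$, so by Cauchy-Schwarz and exchangeability
\[
\EE|\Delta_t^{i,(1)}|^2\leq C\varepsilon^{-2d}\,\sup_{j}\EE|X_t^{j,\varepsilon}-Y_t^{j,\varepsilon}|^2.
\]
The delicate term is $\Delta_t^{i,(2)}$. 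Here the point, using \eqref{RSDEs}, is that conditional on $\mathcal{F}_T^W$ the processes $(Y^{j,\varepsilon})_{j=1}^N$ are i.i.d.\ with conditional law $\rho_t^\varepsilon(x)\dx$. Thus
\[
\EE\bigl[\nabla\mc{G}_\varepsilon(Y_t^{i,\varepsilon}-Y_t^{j,\varepsilon})\,\bigl|\,Y_t^{i,\varepsilon},\mathcal{F}_t^W\bigr]=\nabla\mc{G}_\varepsilon\ast\rho_t^\varepsilon(Y_t^{i,\varepsilon})\quad\text{for }j\neq i,
\]
so $\Delta_t^{i,(2)}$ is an average of $N-1$ conditionally i.i.d., mean-zero and conditionally bounded random vectors. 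The conditional variance bound plus \eqref{espotential} with $|\alpha|=0$, giving $\|\nabla\mc{G}_\varepsilon\|_\infty\leq C\varepsilon^{1-d}$, yields
\[
\EE|\Delta_t^{i,(2)}|^2\leq \frac{C\varepsilon^{2(1-d)}}{N-1}.
\]

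Combining these estimates, setting $h(t):=\sup_i\EE|X_t^{i,\varepsilon}-Y_t^{i,\varepsilon}|^2$, I expect an inequality of the form
\[
h(t)\leq C_1\int_0^t\bigl(1+\varepsilon^{-2d}\bigr)h(s)\ds+\frac{C_2\,T\,\varepsilon^{2(1-d)}}{N},
\]
so Gronwall gives $h(T)\leq \tfrac{C_2 T\varepsilon^{2(1-d)}}{N}\exp(C_1 T\varepsilon^{-2d}+C_1T)$. Under the scaling $\varepsilon^{-d}\leq \delta\ln N$ one has $\exp(C_1T\varepsilon^{-2d})\leq N^{C\delta\varepsilon^{-d}}$; after absorbing constants this becomes $N^{C\delta}$ (for $\delta$ small), and $\varepsilon^{2(1-d)}\leq(\delta\ln N)^{(2d-2)/d}$, giving exactly the claimed bound \eqref{eqmean}.

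\textbf{Main obstacle.} The principal difficulty is the competition between the exploding Lipschitz constant $\|\nabla^2\mc{G}_\varepsilon\|_\infty\sim\varepsilon^{-d}$ entering through Gronwall and the vanishing sampling error $\varepsilon^{2(1-d)}/N$. Balancing them forces the logarithmic scaling $\varepsilon^{-d}\sim\delta\ln N$ that appears in the hypothesis. A secondary subtlety is that, because of the common noise $W_t$, the standard (unconditional) propagation-of-chaos argument fails; one must rely on the conditional i.i.d.\ structure of $(Y^{j,\varepsilon})_j$ given $\mathcal{F}_T^W$, which is ensured precisely by Theorem \ref{thmSDE} identifying $\rho_t^\varepsilon$ as the conditional density. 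Care is also needed to show that the $B^i$-martingale parts still vanish after this conditioning (they do, since the $B^i$'s are independent of $W$).
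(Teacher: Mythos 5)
Your overall architecture (synchronous coupling, It\^o's formula, splitting the interaction error into a Lipschitz part and a conditionally centered sampling part, conditional i.i.d.\ structure given the common noise, then Gronwall) is the same as the paper's. But there is a genuine quantitative gap in how you treat $\Delta_t^{i,(1)}$, and it breaks the final rate. By estimating $\EE|\Delta_t^{i,(1)}|^2\leq C\varepsilon^{-2d}\sup_j\EE|X_t^{j,\varepsilon}-Y_t^{j,\varepsilon}|^2$ and then presumably using $2(X-Y)\cdot\Delta^{(1)}\leq|X-Y|^2+|\Delta^{(1)}|^2$, you put the \emph{square} of the mollified Lipschitz constant, $\varepsilon^{-2d}$, into the Gronwall coefficient. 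The exponential factor is then $\exp(C_1T\varepsilon^{-2d})$, and your claim that under $\varepsilon^{-d}\leq\delta\ln N$ this is $\leq N^{C\delta}$ ``after absorbing constants'' is false: with the natural choice $\varepsilon^{-d}=\delta\ln N$ (which is what the sampling term and Corollary \ref{cormean} require) one gets
\begin{equation*}
\exp\left(C_1T\varepsilon^{-2d}\right)=\exp\left(C_1T\delta^2(\ln N)^2\right)=N^{C_1T\delta^2\ln N},
\end{equation*}
whose exponent diverges with $N$ no matter how small $\delta$ is; this overwhelms the $\varepsilon^{2(1-d)}/N$ factor and does not yield \eqref{eqmean}. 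Your proposed bound would only work under the stronger scaling $\varepsilon^{-2d}\lesssim\delta\ln N$, which is not what the theorem assumes.

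The fix — and the point where the paper's argument differs from yours — is to keep the Lipschitz constant to the \emph{first} power by estimating the cross term directly rather than via $\EE|\Delta^{(1)}|^2$: using $\|\nabla\mc{G}_\varepsilon\|_{W^{1,\infty}}\leq C\varepsilon^{-d}$,
\begin{equation*}
2\chi\left(X_s^{i,\varepsilon}-Y_s^{i,\varepsilon}\right)\cdot\Delta_s^{i,(1)}
\leq \frac{C\chi\varepsilon^{-d}}{N-1}\sum_{j=1}^N\left(\left|X_s^{i,\varepsilon}-Y_s^{i,\varepsilon}\right|^2+\left|X_s^{j,\varepsilon}-Y_s^{j,\varepsilon}\right|^2\right),
\end{equation*}
so that after taking expectations and suprema the Gronwall coefficient is $C\varepsilon^{-d}$, not $C\varepsilon^{-2d}$. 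Then $e^{C\varepsilon^{-d}T}\leq e^{CT\delta\ln N}=N^{CT\delta}$, which is exactly the admissible $N^{C\delta}$ loss, and combined with your (correct) sampling estimate $\EE|\Delta^{(2)}_s|^2\leq C\varepsilon^{2(1-d)}/(N-1)$ this gives \eqref{eqmean}. The rest of your outline (vanishing of the $B^i$ and $W$ martingale terms in expectation, Lipschitz handling of $\nu,\sigma$, and the conditional mean-zero property of the $Z_j^i$ given $(Y_s^{i,\varepsilon},\mc F_s^W)$) matches the paper and is fine.
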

\begin{proof}
Applying It\^{o}'s formula yields that
\begin{align}
 |X_t^{i,\varepsilon}-Y_t^{i,\varepsilon}|^2&= \int_0^t 2\chi(X_s^{i,\varepsilon}-Y_s^{i,\varepsilon})\cdot \left(\frac{1}{N-1}\sum\limits_{j\neq i}^N \nabla\mc{G}_\varepsilon(X_s^{i,\varepsilon}-X_s^{j,\varepsilon})- \nabla\mc{G}_\varepsilon\ast\rho_s^{\varepsilon}(Y_s^{i,\varepsilon})\right)\ds\notag\\
&\quad+\int_0^t2(X_s^{i,\varepsilon}-Y_s^{i,\varepsilon})\cdot (\nu_s(X_s^{i,\varepsilon})-\nu_s(Y_s^{i,\varepsilon})) \ud B_s^i+\int_0^t2(X_s^{i,\varepsilon}-Y_s^{i,\varepsilon})\cdot (\sigma_s(X_s^{i,\varepsilon})-\sigma_s(Y_s^{i,\varepsilon}))\ud W_s \notag\\
&\quad+\int_0^t\sum_j^d\sum_{k=1}^{d'}\left(\nu^{jk}_s(X_s^{i,\varepsilon})-\nu^{jk}_s(Y_s^{i,\varepsilon})\right)^2\ds+\int_0^t\sum_j^d\sum_{k=1}^{d'}\left(\sigma^{jk}_s(X_s^{i,\varepsilon})-\sigma^{jk}_s(Y_s^{i,\varepsilon})\right)^2\ds\,.\notag
\end{align}
Taking expectations on both sides one has
\begin{align}\label{estotal}
\EE\left[|X_t^{i,\varepsilon}-Y_t^{i,\varepsilon}|^2\right]&\leq\EE\left[\int_0^t 2\chi(X_s^{i,\varepsilon}-Y_s^{i,\varepsilon})\cdot \left(\frac{1}{N-1}\sum\limits_{j\neq i}^N \nabla\mc{G}_\varepsilon(X_s^{i,\varepsilon}-X_s^{j,\varepsilon})- \nabla\mc{G}_\varepsilon\ast\rho_s^{\varepsilon}(Y_s^{i,\varepsilon})\right)\ds\right] \notag\\
&\quad+C(d,d',\Lambda)\int_0^t\EE\left[|X_s^{i,\varepsilon}-Y_s^{i,\varepsilon}|^2\right]\ds\,,
\end{align}
where we have used the fact that
\begin{equation*}
\EE\left[\int_0^t2(X_s^{i,\varepsilon}-Y_s^{i,\varepsilon})\cdot (\nu_s(X_s^{i,\varepsilon})-\nu_s(Y_s^{i,\varepsilon})) \ud B_s^i\right]=\EE\left[\int_0^t2(X_s^{i,\varepsilon}-Y_s^{i,\varepsilon})\cdot (\sigma_s(X_s^{i,\varepsilon})-\sigma_s(Y_s^{i,\varepsilon}))\ud W_s\right]=0\,,
\end{equation*}
and $(ii)$ in Assumption \ref{assum1}.

To continue, we split the error
\begin{equation*}
\EE\left[\int_0^t 2\chi(X_s^{i,\varepsilon}-Y_s^{i,\varepsilon})\cdot \left(\frac{1}{N-1}\sum\limits_{j\neq i}^N \nabla\mc{G}_\varepsilon(X_s^{i,\varepsilon}-X_s^{j,\varepsilon})- \nabla\mc{G}_\varepsilon\ast\rho_s^{\varepsilon}(Y_s^{i,\varepsilon})\right)\ds\right]
\end{equation*}
into three parts. Notice that 
	\begin{align}
	&\frac{1}{N-1}\sum\limits_{j\neq i}^N \nabla\mc{G}_\varepsilon(X_s^{i,\varepsilon}-X_s^{j,\varepsilon})- \nabla\mc{G}_\varepsilon\ast\rho_s^{\varepsilon}(Y_s^{i,\varepsilon})\notag\\
	&= \frac{1}{N-1} \left(\sum\limits_{j\neq i}^N \nabla\mc{G}_\varepsilon(X_s^{i,\varepsilon}-X_s^{j,\varepsilon})- \sum\limits_{j\neq i}^N \nabla\mc{G}_\varepsilon(Y_s^{i,\varepsilon}-Y_s^{j,\varepsilon})\right)\notag\\
	&\quad
		+\frac{1}{N-1} \sum\limits_{j\neq i}^N \nabla\mc{G}_\varepsilon(Y_s^{i,\varepsilon}-Y_s^{j,\varepsilon})- \nabla\mc{G}_\varepsilon\ast\rho_s^{\varepsilon}(Y_s^{i,\varepsilon})\notag\\
	&=:I_{11}^s+I_{12}^s\,. \notag
	\end{align}
	First we compute
	\begin{align*}
	\int_0^t 2\chi(X_s^{i,\varepsilon}-Y_s^{i,\varepsilon})\cdot I_{11}^s \ds&\leq 	2\chi\int_0^t |X_s^{i,\varepsilon}-Y_s^{i,\varepsilon}| \frac{1}{N-1}\norm{\nabla\mc{G}_\varepsilon}_{W^{1,\infty}}\sum\limits_{j=1}^N\left|X_s^{j,\varepsilon}-Y_s^{j,\varepsilon} \right|\ds\\
	&\leq\frac{C\varepsilon^{-d}}{N-1}\int_0^t\sum\limits_{j=1}^N\left|X_s^{j,\varepsilon}-Y_s^{j,\varepsilon} \right|^2\ds\,,
	\end{align*}
which leads to
	\begin{align}
	\EE\left[\int_0^t 2\chi(X_s^{i,\varepsilon}-Y_s^{i,\varepsilon})\cdot I_{11}^s \ds\right]
	& \leq \frac{C\varepsilon^{-d}}{N-1}\int_0^t\sum\limits_{j=1}^N\EE\left[\left|X_s^{j,\varepsilon}-Y_s^{j,\varepsilon} \right|^2\right]\ds
	\notag\\
		& \leq C\varepsilon^{-d}\int_0^t\sup\limits_{i=1,\cdots,N}\EE\left[\left|X_s^{i,\varepsilon}-Y_s^{i,\varepsilon}\right|^2\right]\ds\,,\label{esI11}
	\end{align}
	where $C$ depends only on $\chi$ and $d$.

	To estimate the second term, we rewrite
	\begin{align*}
	I_{12}^s= \frac{1}{N-1}\sum\limits_{j\neq i}^N\left(\nabla\mc{G}_\varepsilon(Y_s^{i,\varepsilon}-Y_s^{j,\varepsilon})- \nabla\mc{G}_\varepsilon\ast\rho_s^{\varepsilon}(Y_s^{i,\varepsilon})\right)=:\frac{1}{N-1}\sum\limits_{j\neq i}^NZ_j^i\,,
	\end{align*}
	where 
	\begin{equation*}
	Z_j^i=\nabla\mc{G}_\varepsilon(Y_s^{i,\varepsilon}-Y_s^{j,\varepsilon})- \nabla\mc{G}_\varepsilon\ast\rho_s^{\varepsilon}(Y_s^{i,\varepsilon}),\quad j\neq i\,.
	\end{equation*}
	It is easy to check that
	\begin{equation*}
	\EE[Z_j^i|\mc{F}_t^W,Y_s^{i,\varepsilon}]=\nabla\mc{G}_\varepsilon\ast\rho_s^{\varepsilon}(Y_s^{i,\varepsilon})- \nabla\mc{G}_\varepsilon\ast\rho_s^{\varepsilon}(Y_s^{i,\varepsilon})=0\,,
	\end{equation*}
	since $\{Y_s^{j,\varepsilon}\}_{j=1}^N$ are conditional i.i.d. with common conditional density $\rho_s^{\varepsilon}$ given $\mc{F}_t^W$. Thus one concludes that
	\begin{align}
	\EE[|I_{12}^s|^2]&=\frac{1}{(N-1)^2}\EE\left[\left(\sum\limits_{j\neq i}^NZ_j^i\right)\left(\sum\limits_{k\neq i}^NZ_k^i\right)\right]\nn\\
	&=\frac{1}{(N-1)^2}\EE\left[\EE\left[\left(\sum\limits_{j\neq i}^NZ_j^i\right)\left(\sum\limits_{k\neq i}^NZ_k^i\right)|\mc{F}_t^W,Y_s^{i,\varepsilon}\right]\right] \nn\\
	&=\frac{1}{(N-1)^2}\EE\left[\EE\left[\sum\limits_{j\neq i}^N|Z_j^i|^2|\mc{F}_t^W,Y_s^{i,\varepsilon}\right]\right] 
	=\frac{1}{N-1}\EE[|Z_{1}^2|^2]\,. \nn
	\end{align}
	Due to the fact that, using \eqref{espotential},
	\begin{equation*}
\EE[|Z_{2}^1|^2]=\EE\left[(\nabla\mc{G}_\varepsilon(Y_s^{1,\varepsilon}-Y_s^{2,\varepsilon})- \nabla\mc{G}_\varepsilon\ast\rho_s^{\varepsilon}(Y_s^{1,\varepsilon}))^2\right]\leq 4\|\nabla\mc{G}_\varepsilon\|_\infty^2\leq C\varepsilon^{-2(d-1)},
	\end{equation*}
	one has
	\begin{equation*}
	\EE[|I_{12}^s|^2]\leq \frac{C\varepsilon^{-2(d-1)}}{N-1}\,.
	\end{equation*}
	Thus we concludes
	\begin{align}\label{esI12}
	&\EE\left[\int_0^t 2\chi(X_s^{i,\varepsilon}-Y_s^{i,\varepsilon})\cdot I_{12}^s \ds\right]\notag\\
	\leq &\int_0^t \EE\left[|X_s^{i,\varepsilon}-Y_s^{i,\varepsilon}|^2\right] \ds+\int_0^t\chi^2 \EE\left[|I_{12}^s|^2\right] \ds\leq \int_0^t\EE\left[|X_s^{i,\varepsilon}-Y_s^{i,\varepsilon}|^2\right] \ds+\frac{C\varepsilon^{-2(d-1)}}{N-1}\,, \nn
	\end{align}
where $C$ depends only on $\chi$ and $d$.

Now collecting estimates \eqref{esI11} and \eqref{esI12} implies
\begin{align}
&\EE\left[\int_0^t 2\chi(X_s^{i,\varepsilon}-Y_s^{i,\varepsilon})\cdot \left(\frac{1}{N-1}\sum\limits_{j\neq i}^N \nabla\mc{G}_\varepsilon(X_s^{i,\varepsilon}-X_s^{j,\varepsilon})- \nabla\mc{G}_\varepsilon\ast\rho_s^{\varepsilon}(Y_s^{i,\varepsilon})\right)\ds\right]\notag\\
\leq& C\varepsilon^{-d}\int_0^t\sup\limits_{i=1,\cdots,N}\EE\left[\left|X_s^{i,\varepsilon}-Y_s^{i,\varepsilon}\right|^2\right]\ds+\frac{C\varepsilon^{-2(d-1)}}{N-1}
\end{align}
which together with \eqref{estotal} lead to
\begin{equation*}
\sup\limits_{i=1,\cdots,N}\EE\left[\left|X_t^{i,\varepsilon}-Y_t^{i,\varepsilon}\right|^2\right]\leq C_1\varepsilon^{-d}\int_0^t\sup\limits_{i=1,\cdots,N}\EE\left[\left|X_s^{i,\varepsilon}-Y_s^{i,\varepsilon}\right|^2\right]\ds+\frac{C_2\varepsilon^{-2(d-1)}}{N-1}\,.
\end{equation*}
Applying Gronwall's inequality further yields that
\begin{align}
\sup\limits_{t\in[0,T]}\sup\limits_{i=1,\cdots,N}\EE\left[\left|X_t^{i,\varepsilon}-Y_t^i\right|^2\right]
\leq\frac{C_2\varepsilon^{-2(d-1)}}{N-1}e^{C_1\varepsilon^{-d}T}\leq C\frac{(\delta\ln(N))^{\frac{2d-2}{d}} }{N^{1-C\delta}}\,, \notag
\end{align}
where we let $e^{\varepsilon^{-d}}\leq N^{\delta}$, i.e. $\varepsilon^{-d}\leq \delta \ln (N)$,  for any fixed $0<\delta<\frac{1}{C}$. The proof is completed.
\end{proof}

{Theorem \ref{thmmean} implies the convergence in law of the empirical measure in the following sense:
\begin{cor}\label{cormean}
	Under the same assumptions as in Theorem \ref{thmmean}, the empirical measure 
		\begin{equation}
	\rho_t^{\varepsilon,N}:=\frac{1}{N}\sum_{i=1}^{N}\delta_{X_t^{i,\varepsilon}}
	\end{equation}
	associated to the stochastic particle system \eqref{particle} converges weakly to  unique   solution $\rho_t$ to the nonlinear SPDE \eqref{SPDE}.
	More precisely,  for any fixed $0<\delta\ll1$, such that $\varepsilon^{-d}= \delta \ln (N)$ and $C\delta<1$, it holds that  for all $t\in[0,T]$
	\begin{equation}
\EE\left[|\la \rho_t^{\varepsilon,N},\phi\ra-\la \rho_t,\phi\ra|^2\right]\leq C\left(\frac{(\delta\ln(N))^{\frac{2d-2}{d}} }{N^{1-C\delta}}+\frac{1}{N}+(\delta \ln (N))^{-\frac{2}{d}}\right)\,,
	\end{equation}
for any $\phi\in C_c^1(\RR^d)$, where $C$  depends only on $\norm{\phi}_{C^1}$, $\chi,T,\lambda,\Lambda,d$, and $\norm{\rho_0}_{ W^{2,2}(\RR^d)}$.
\end{cor}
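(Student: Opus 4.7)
The plan is to decompose the difference via the triangle inequality into three errors, each controlled by previously established results. Write
\begin{align*}
\la\rho_t^{\varepsilon,N},\phi\ra-\la\rho_t,\phi\ra
&=\underbrace{\frac{1}{N}\sum_{i=1}^N\bigl(\phi(X_t^{i,\varepsilon})-\phi(Y_t^{i,\varepsilon})\bigr)}_{=:\mathrm{I}_t}
+\underbrace{\frac{1}{N}\sum_{i=1}^N\phi(Y_t^{i,\varepsilon})-\la\rho_t^{\varepsilon},\phi\ra}_{=:\mathrm{II}_t}
+\underbrace{\la\rho_t^{\varepsilon}-\rho_t,\phi\ra}_{=:\mathrm{III}_t},
\end{align*}
where $\{(Y_t^{i,\varepsilon})_{t\geq0}\}_{i=1}^N$ is the regularized mean-field system \eqref{RSDEs} coupled to $\{(X_t^{i,\varepsilon})_{t\geq0}\}_{i=1}^N$ through the same Brownian motions and initial data. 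Then $\EE[|\la\rho_t^{\varepsilon,N},\phi\ra-\la\rho_t,\phi\ra|^2]\leq 3\bigl(\EE[|\mathrm{I}_t|^2]+\EE[|\mathrm{II}_t|^2]+\EE[|\mathrm{III}_t|^2]\bigr)$, and I would bound each term by one of the three expressions on the right-hand side of the target inequality.

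For $\mathrm{I}_t$, using $\phi\in C_c^1(\RR^d)$ so that $\phi$ is Lipschitz with constant $\|\nabla\phi\|_\infty\leq \|\phi\|_{C^1}$, together with Jensen's inequality,
\begin{equation*}
\EE[|\mathrm{I}_t|^2]\leq \|\phi\|_{C^1}^2\,\frac{1}{N}\sum_{i=1}^N\EE[|X_t^{i,\varepsilon}-Y_t^{i,\varepsilon}|^2]\leq \|\phi\|_{C^1}^2 \sup_{i}\EE[|X_t^{i,\varepsilon}-Y_t^{i,\varepsilon}|^2],
\end{equation*}
and Theorem \ref{thmmean} yields the $\frac{(\delta\ln N)^{(2d-2)/d}}{N^{1-C\delta}}$ contribution. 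For $\mathrm{II}_t$, I exploit the key fact that, given $\mc F_t^W$, the random variables $\{Y_t^{i,\varepsilon}\}_{i=1}^N$ are conditionally i.i.d.\ with common conditional density $\rho_t^{\varepsilon}$, so $\EE[\phi(Y_t^{i,\varepsilon})\mid\mc F_t^W]=\la\rho_t^{\varepsilon},\phi\ra$ and
\begin{equation*}
\EE\bigl[|\mathrm{II}_t|^2\bigm|\mc F_t^W\bigr]=\frac{1}{N}\,\mathrm{Var}\bigl(\phi(Y_t^{1,\varepsilon})\bigm|\mc F_t^W\bigr)\leq \frac{\|\phi\|_\infty^2}{N},
\end{equation*}
which after taking total expectation gives the $\frac{1}{N}$ contribution.

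For $\mathrm{III}_t$, since $\phi\in C_c^1(\RR^d)$ has compact support (hence $\phi\in L^{4/3}(\RR^d)$), Hölder's inequality gives $|\mathrm{III}_t|\leq \|\phi\|_{L^{4/3}}\|\rho_t^{\varepsilon}-\rho_t\|_{L^4}$, and the regularization error estimate \eqref{error} yields
\begin{equation*}
\EE[|\mathrm{III}_t|^2]\leq \|\phi\|_{L^{4/3}}^2 \,\EE[\|\rho_t^{\varepsilon}-\rho_t\|_{L^4}^2]\leq C\varepsilon^2=C(\delta\ln N)^{-2/d},
\end{equation*}
where the identification $\varepsilon^{-d}=\delta\ln N$ converts the $\varepsilon^2$ to $(\delta\ln N)^{-2/d}$. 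Combining the three bounds concludes the proof. I do not anticipate genuine obstacles here: the core difficulties (well-posedness of SPDE \eqref{SPDE}, existence of the conditional density, and the coupled particle-mean-field estimate) have already been resolved in Theorems \ref{thm-wellposedness}, \ref{thmSDE}, and \ref{thmmean}; the only mild subtlety is making sure that the constant implicit in \eqref{error} for the $S_{\mc F^W}^2([0,T];L^4)$ norm of $\rho^\varepsilon-\rho$ is independent of $\varepsilon$ and $N$, which follows because the a priori bounds on $\|\rho\|_{S_{\mc F^W}^\infty([0,T];L^4)}$, $\|\rho^\varepsilon\|_{S_{\mc F^W}^\infty([0,T];L^4)}$ and $\|\rho^\varepsilon\|_{L^2_{\mc F^W}([0,T];W^{1,4})}$ obtained in the proofs of Theorems \ref{prop-weak-soltn} and \ref{thm-wellposedness} depend only on $T,\chi,\lambda,\Lambda,d$ and $\|\rho_0\|_{W^{2,2}}$.
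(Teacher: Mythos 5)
Your proposal is correct and follows essentially the same route as the paper: the paper likewise splits the error into the coupling term controlled by Theorem \ref{thmmean}, the conditional law-of-large-numbers term of order $1/N$ for the conditionally i.i.d.\ $Y_t^{i,\varepsilon}$, and the regularization error $\|\rho^\varepsilon-\rho\|_{S^2_{\mc F^W}([0,T];L^4)}\leq C\varepsilon$ from \eqref{error} with $\varepsilon^{-d}=\delta\ln N$. The only differences are cosmetic (a single triangle inequality with constant $3$ versus nested factors of $2$, and making the conditional variance computation explicit).
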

\begin{proof}
Let us compute
\begin{align}
\EE\left[|\la \rho_t^{\varepsilon,N},\phi\ra-\la \rho_t^\varepsilon,\phi\ra|^2\right]&=\EE\left[\left|\frac{1}{N}\sum_{i=1}^{N}\phi(X_t^{i,\varepsilon})-\int_{\RR^d}\phi(x)\rho_t^\varepsilon(x)\dx\right|^2\right]\nn\\
&\leq 2\EE\left[|\phi(X_t^{1,\varepsilon})-\phi(Y_t^{1,\varepsilon})|^2\right]+2\EE\left[\left|\frac{1}{N}\sum_{i=1}^{N}\phi(Y_t^{i,\varepsilon})-\int_{\RR^d}\phi(x)\rho_t^\varepsilon(x)\dx\right|^2\right] \nn\\
&=: I_1+I_2\,.
\end{align}
According to \eqref{eqmean}, one has
\begin{equation}\label{esI1}
I_1\leq 2\norm{\nabla\phi}_{\infty}^2\EE\left[|X_t^{1,\varepsilon}-Y_t^{1,\varepsilon}|^2\right]\leq C\frac{(\delta\ln(N))^{\frac{2d-2}{d}} }{N^{1-C\delta}}\,,
\end{equation}
where $C$  depends only on $\norm{\nabla\phi}_{\infty}$, $\chi,T,\lambda,\Lambda,d$ and $\norm{\rho_0}_{W^{2,2}(\RR^d)}$. To estimate $I_2$, we compute that
\begin{align}
&\EE\left[\left|\frac{1}{N}\sum_{i=1}^{N}\phi(Y_t^{i,\varepsilon})-\int_{\RR^d}\phi(x)\rho_t^\varepsilon(x)\dx\right|^2\right]
\leq\, \frac{1}{N^2}\sum_{i=1}^{N}\EE\left[\left|\phi(Y_t^{i,\varepsilon})-\int_{\RR^d}\phi(x)\rho_t^\varepsilon(x)\right|^2\right]
\leq\, C\frac{1}{N}\,,
\end{align}
where $C$ depends only on $\norm{\phi}_{\infty}$. This combined with \eqref{esI1} implies 
	\begin{equation}\label{coreq}
\EE\left[|\la \rho_t^{\varepsilon,N},\phi\ra-\la \rho_t^\varepsilon,\phi\ra|^2\right]\leq C\left(\frac{(\delta\ln(N))^{\frac{2d-2}{d}} }{N^{1-C\delta}}+\frac{1}{N} \right)\,,
\end{equation}

 Next, using \eqref{error} we compute
\begin{align}
\EE\left[|\la \rho_t^{\varepsilon},\phi\ra-\la \rho_t,\phi\ra|^2\right]\leq C\EE\left[\sup_{t\in[0,T]}\norm{\rho^\varepsilon_t-\rho_t}_4^2\right]
=C\norm{\rho^\varepsilon-\rho}_{S_{\mc F^W}^{2}([0,T];L^4(\RR^d))}^2\leq C\varepsilon^2\,.
\end{align}
Hence one has
\begin{align}
&\EE\left[|\la \rho_t^{\varepsilon,N},\phi\ra-\la \rho_t,\phi\ra|^2\right]\leq 2\EE\left[|\la \rho_t^{\varepsilon,N},\phi\ra-\la \rho_t^\varepsilon,\phi\ra|^2\right]+2\EE\left[|\la \rho_t^{\varepsilon},\phi\ra-\la \rho_t,\phi\ra|^2\right]\notag\\
\leq& C\left(\frac{(\delta\ln(N))^{\frac{2d-2}{d}} }{N^{1-C\delta}}+\frac{1}{N}+(\delta \ln (N))^{-\frac{2}{d}} \right)\,.
\end{align}
This completes the proof.
\end{proof}}

\bibliographystyle{abbrv}
\bibliography{stochastic}


\end{document}